\documentclass[11pt]{article}

\pdfoutput=1
\usepackage[backref]{hyperref}
\usepackage{amsmath,amsfonts,amssymb,latexsym,amsthm,dsfont}
\usepackage[a4paper,margin=2.5cm]{geometry}
\usepackage{graphicx}
\usepackage[utf8]{inputenc}
\usepackage[T1]{fontenc}
\usepackage{lmodern}
\usepackage{tikz,enumitem}
\usetikzlibrary{arrows,calc,decorations.text}
\usepackage[english]{babel}


\newtheorem{thm}{Theorem}[section]%
\newtheorem{cor}[thm]{Corollary}%
\newtheorem{prop}[thm]{Proposition}%
\newtheorem{lem}[thm]{Lemma}%

\newtheorem{xpl}[thm]{Example}%
\newtheorem{rem}[thm]{Remark}%
%


\newcommand{\dC}{\mathbb{C}}
\newcommand{\dE}{\mathbb{E}}

\newcommand{\dN}{\mathbb{N}}
\newcommand{\dP}{\mathbb{P}}
\newcommand{\dR}{\mathbb{R}}
\newcommand{\dS}{\mathbb{S}}


\newcommand{\cA}{\mathcal{A}}
\newcommand{\cC}{\mathcal{C}}
\newcommand{\cF}{\mathcal{F}}

\newcommand{\cM}{\mathcal{M}}


 
\let\abs\ABS
\newcommand{\BRA}[1]{{{\left\{#1\right\}}}} 
\newcommand{\DP}[1]{{{\left<#1\right>}}} 
\newcommand{\NRM}[1]{{{\left\| #1\right\|}}} 
\newcommand{\PAR}[1]{{{\left(#1\right)}}} 
\newcommand{\SBRA}[1]{{{\left[#1\right]}}} 
\renewcommand{\leq}{\leqslant}
\renewcommand{\geq}{\geqslant}

\newcommand{\ind}{\mathds{1}}


\title{A note on simple randomly switched linear systems}
\author{Gabriel LAGASQUIE}

\begin{document}

\maketitle

\begin{abstract}

We construct a planar process that switches randomly between the flows of two linear systems built from two Hurwitz matrices 
(all eigenvalues have negative real parts). The goal here is to study the long time behaviour according to the switching rates. 
We will see that, even if the two systems are stable, it is possible to obtain a blow up if we choose the switching rates wisely. 
Finally we will see a connection, between the tail of the invariant measure (when the switching times follow an 
exponential law) and the existence of a deterministic control that makes the process explode.

\end{abstract}

\textbf{AMS Classification: }60J99, 34A60

\section{Introduction}

Piecewise Deterministic Markov Processes (PDMPs) is a huge class of stochastic processes introduced by Davis \cite{dav}. In this note we are studying 
a particular subclass of these processes. we consider $(X_t, I_t)_{t\geq0}$ evolving on the space $\dR^d \times \{0,1\}$ solving the equation:
\[
 \dot{X}_t = F(X_t, I_t)
\]
where $F$ is a smooth function and $(I_t)$ is a jump process on $\{0,1\}$. We call $\lambda_i$ the jump rate of the process 
$(I_t)$ from state $i$ to state $1-i$. Such processes play a role in modeling problems in various fields such as 
molecular biology \cite{radu}, population dynamics \cite{lotka}, Internet traffic \cite{internet}. Even if these processes are easy 
to define, their stability is never clear.

More precisely, we are here interested in the long time qualitative behaviour of the planar randomly switched process $(X_t,I_t)_{t\geq0} \in \dR^d \times \{0,1\}$ 
solving the equation
\begin{equation}\label{eq:ode}
 \left\{
\begin{aligned}
 &\dot{X}_t = A_{I_t}X_t \\
 &X_0 = x_0,
\end{aligned}
\right.
\end{equation}
where $A_0$, $A_1$ are $d\times d$ real matrices, and $(I_t)$ is a Markov process on $E=\{0,1\}$ with constant jump rates:
\begin{align*}
 &\lambda_0 = \beta u,\\
 &\lambda_1 = \beta (1-u)
\end{align*}
where $\beta >0$ and $u\in(0,1)$. The process $((X_t,I_t))_{t\ge0}$ is a piecewise deterministic Markov process as defined in \cite{dav} and 
several examples have been studied in \cite{matt} or \cite{mal2}. It appears that the following quantity dictates the large time behaviour of this process according to the parameters $A_0$, 
$A_1$, $\beta$ and $u$:
\[
 \chi := \chi(A_0,A_1,\beta,u) = \underset{t \rightarrow +\infty}{\lim} \frac{1}{t} \log\left( \|X_t\| \right) \quad\text{a.s}.
\]
Indeed if $\chi$ is positive then the system explodes almost surely whereas if $\chi$ is negative the system 
is stable and goes to zero almost surely.\\

The asymptotic behaviour of such processes when $(I_t)$ is any deterministic process and $d=2$ is now understood. Given two matrices in $\cM_2(\dR)$, 
\cite{deter} gives a necessary and sufficient condition for the existence of a deterministic function $(I_t)$ that produces a blow up. 
But the articles \cite{matt} and \cite{mal1} have shown that the behaviour of planar randomly switched systems may be surprising: according to the 
value of the switching rate $\beta$, the process can either blow up or go to zero even if each system is stable. In \cite{matt}, 
the authors build a randomly switched system living in the space $\dR^{2n}$ which behaviour comes down to alternating 
blow up periods and stable periods a finite number of times as the jump rate grows.\\

A question raised from these works is the existence of a planar randomly switched system which behaviour is also alternating between 
blow up periods and stable periods. The goal of this note is to construct such a process.\\
Our idea is the following: put a stochastic control in a deterministic planar switched system which satisfies the qualitative long time behaviour 
condition (using the criteria in \cite{deter}).
For $a>0$ and $b>1$, we define two real matrices as follow:
\[
A_0=\begin{pmatrix}
 -a & b\\
 -\frac{1}{b} & -a
\end{pmatrix}
\quad\text{and}\quad
A_1=\begin{pmatrix}
 -a & \frac{1}{b}\\
 -b & -a
\end{pmatrix}.
\]

Thanks to these two matrices, we define the continuous process $((X_t,I_t))_{t\ge0} \in \dR^2\times\{0,1\}$ solution of \eqref{eq:ode}.
The path of $X_t$ follows a spiral clockwisely during a random time with exponential law of parameter $\lambda_i$ 
before switching on an other spiral as described in Figure \ref{fi:traj}.

\begin{figure}
\begin{center}
 \includegraphics[scale=0.6]{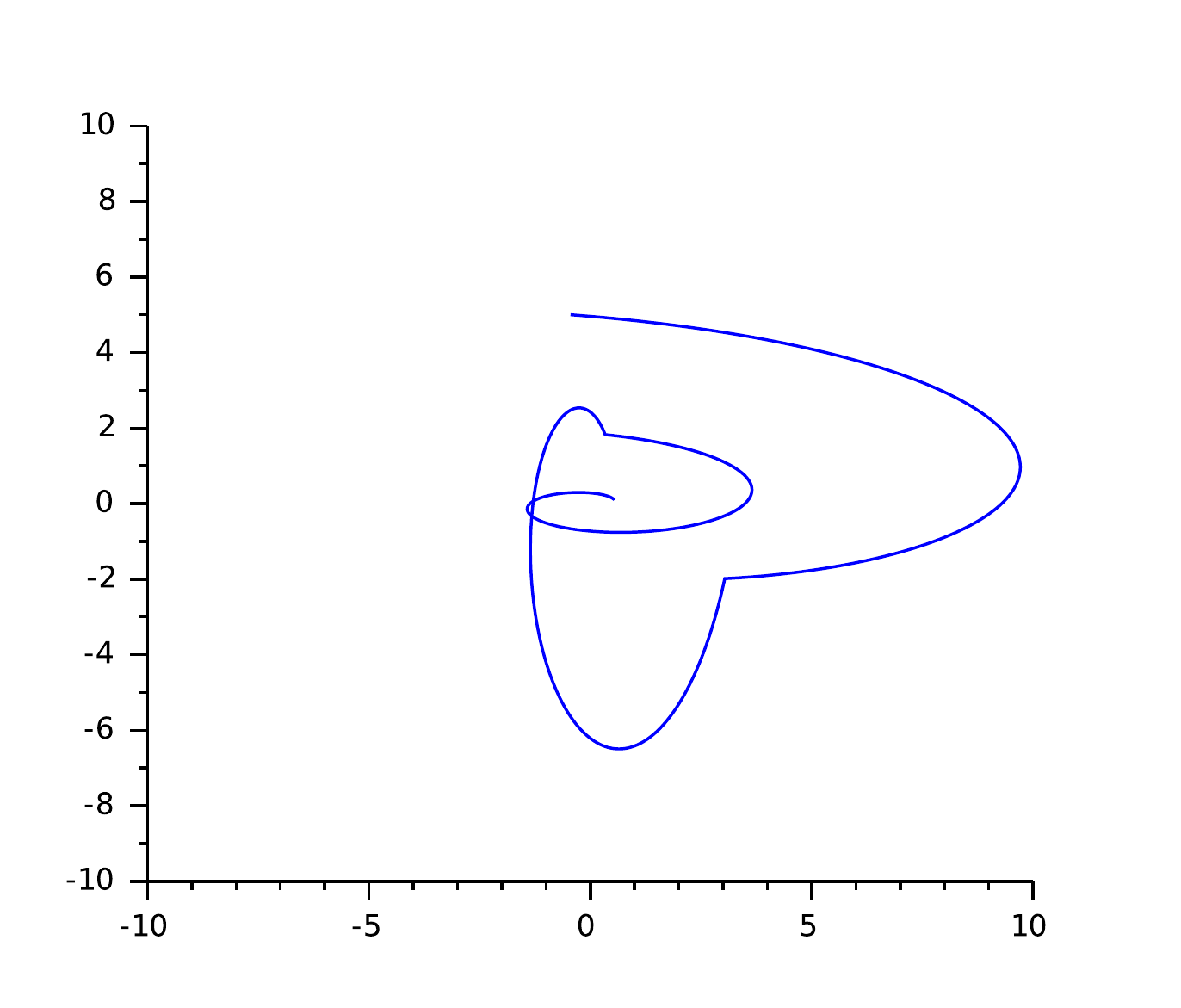}
 \caption{Example of a trajectory of the process \eqref{eq:ode}.}
 \label{fi:traj}
\end{center}
\end{figure}

We also define:
\[
 \chi := \chi(a,b,\beta,u) = \underset{t \rightarrow +\infty}{\lim} \frac{1}{t} \log\left( \|X_t\| \right) \quad\text{a.s}
\]
which sign gives the long time qualitative behaviour of our process as said before.\\

The main result of this note is the following which is similar to the main result in \cite{matt}:
\begin{thm}
For any $u \in (0,1)$,
\begin{align*}
&(i) \quad \forall a,b >0, \quad \chi(a,b,\beta,u) \longrightarrow -a \text{ when } \beta \rightarrow 0 \text{ or } \beta \rightarrow +\infty. \\
&(ii) \quad \text{For every } \beta,u>0 \text{ there exist }a>0 \text{ and } b>0 \text{ so that }  \chi(a,b,\beta,u)>0.
\end{align*}
\end{thm}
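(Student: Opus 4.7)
The plan begins with the decomposition $A_i = -a I + B_i$ where
\[
B_0 = \begin{pmatrix}0 & b \\ -1/b & 0\end{pmatrix}, \quad B_1 = \begin{pmatrix}0 & 1/b \\ -b & 0\end{pmatrix}
\]
are traceless. Since $e^{tA_i} = e^{-at}\,e^{tB_i}$ and the scalar $e^{-at}$ commutes with the switching, the $A$-trajectory is $X_t = e^{-at}\,Y_t$, where $Y$ solves the same switched ODE driven by the $B_i$'s along the same Poisson path. Hence
\[
\chi(a,b,\beta,u) = -a + \chi_0(b,\beta,u),
\]
with $\chi_0$ denoting the Lyapunov exponent of the $B$-system. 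Both $B_i$ have eigenvalues $\pm i$, so each flow is $2\pi$-periodic around an ellipse with ratio $b$ between the semi-axes. Claim (i) reduces to $\chi_0 \to 0$ in both $\beta$-limits, and claim (ii) reduces to finding $b>0$ with $\chi_0(b,\beta,u) > 0$, after which one picks any $a \in (0, \chi_0)$.

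For (i), as $\beta \to 0$ the long inter-jump intervals let each excursion traverse essentially an integer number of closed orbits, and between jumps $|\log\|Y_t\||$ is bounded by $\log b$. Dividing by the $O(1/\beta)$ excursion length gives $\chi_0 \to 0$. As $\beta \to \infty$, the averaging principle identifies the limit with the Lyapunov exponent of $\bar B := uB_0 + (1-u)B_1$. A direct computation yields $\det \bar B = 1 + u(1-u)(b - 1/b)^2 > 0$ and $\tr \bar B = 0$, so $\bar B$ has purely imaginary eigenvalues and zero Lyapunov exponent; convergence of Lyapunov exponents under fast switching is a standard averaging ingredient available in the form used by \cite{matt}.

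For (ii), the plan is to prove the stronger statement $\chi_0(b,\beta,u) \to +\infty$ as $b \to \infty$ with $\beta, u$ fixed. In polar coordinates, under $B_0$,
\[
\frac{\dot r}{r} = \frac{b - 1/b}{2}\sin(2\theta), \qquad \dot\theta = -\frac{1}{b} - \Big(b - \tfrac{1}{b}\Big)\sin^2\theta,
\]
with mirror formulas for $B_1$. A closed-form change of variable $v = \sin^2\theta$ shows that the half-rotation $\theta: \pi/2 \to 0$ under $B_0$ takes time exactly $\pi/2$ and grows $\log r$ by exactly $\log b$; symmetrically, $B_1$ on $(0, -\pi/2)$ grows by $\log b$. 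For $b$ large, $|\dot\theta_i|$ has order $1/b$ in thin slow regions (width $\sim 1/b$) around the rest axes of $B_i$ and order $b$ elsewhere, while $\sin(2\theta)$ vanishes at each rest axis. The heuristic is that the Poisson jumps fire with high probability inside these slow regions, which sit exactly at the boundaries between half-circles of constant-sign radial growth; the random switching then mimics, with high probability, the deterministic optimal schedule and accumulates growth of order $\log b$ per half-cycle of mean duration $O(1) + 1/(\beta u) + 1/(\beta(1-u))$. This yields $\chi_0 \gtrsim c(\beta, u)\log b \to +\infty$.

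The main obstacle is turning this slow/fast heuristic into a rigorous lower bound on $\chi_0$. The cleanest route is through the Furstenberg--Khasminskii formula $\chi_0 = \int Q\,d\mu$ on $S^1 \times \{0,1\}$: write the stationary forward equation for the invariant density $(\mu_0, \mu_1)$, expand asymptotically as $b \to \infty$, and check that the leading correction to the trivially symmetric measure produces a strictly positive pairing with $\sin(2\theta)$. An alternative is a direct sample-path argument on the embedded jump chain, estimating the probability that a jump fires outside a small neighbourhood of a rest axis and comparing with the deterministic optimal schedule from \cite{deter}. Either route requires uniform-in-$b$ control of the invariant density or of tail probabilities for inter-jump angular displacement, which is where the bulk of the technical work lies.
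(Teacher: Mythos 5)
Your reduction $\chi(a,b,\beta,u)=-a+\chi_0(b,\beta,u)$ via $A_i=-aI+B_i$ is correct and matches the structure of the paper (in the paper's explicit formula, Proposition 2.2, the parameter $a$ enters only through the additive term $-a$), your elementary renewal bound for $\beta\to 0$ is a valid and genuinely different argument from the paper's, and your per-quarter-turn computations (time exactly $\pi/2$, radial growth exactly $\log b$, $\det(uB_0+(1-u)B_1)=1+u(1-u)(b-1/b)^2$) all check out. But the proposal has a genuine gap exactly where the theorem has its content: part (ii) is never proved. What you offer is the correct heuristic (jumps concentrate in the slow angular regions, which coincide with the optimal deterministic switching loci from \cite{deter}) followed by two candidate programs, and you yourself state that ``the bulk of the technical work lies'' in carrying either out. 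The missing idea, which is what makes the paper's proof short, is that the angular process $((\Theta_t,I_t))$ is a \emph{one-dimensional} PDMP with strictly negative drift $d_i(\theta)$ in both states, so its invariant density is not something to be controlled asymptotically or uniformly in $b$: it solves a first-order linear ODE in closed form (Lemma 2.1), with $v(\theta)=u\arctan(b\tan\theta)+(1-u)\arctan(b^{-1}\tan\theta)$ and explicit constants $K<0$, $\kappa$. With $\chi=\int \cA\,d\mu$ then fully explicit, the $b\to\infty$ analysis is elementary: $v\to\pi/4$ on $(0,\pi/2)$ and $v\to 3\pi/4$ on $(\pi/2,\pi)$, whence $g(\theta)\to e^{\beta\pi/4}$ and $g(\pi-\theta)\to e^{-\beta\pi/4}$, while $\int_0^{\pi/2}f(\theta)\,d\theta\to-\infty$; since $K<0$ the correction term is strictly positive for large $b$, and one then takes $a$ small. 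Your Furstenberg--Khasminskii route is the right one, but as proposed (asymptotic expansion of an implicit density with uniform-in-$b$ control, or a sample-path comparison with the optimal schedule) it substitutes a hard open-ended program for a computation that is available exactly.

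A secondary soft spot is the $\beta\to\infty$ half of (i). Finite-horizon stochastic averaging gives convergence of trajectories of the $B$-system to those of $\bar B=uB_0+(1-u)B_1$ on compacts, but this does not by itself transfer to the almost-sure Lyapunov exponent, which is a long-time quantity; calling it ``a standard averaging ingredient'' papers over a real step. The result you want is indeed proved in \cite{matt} (and the paper acknowledges this as an alternative route), so an honest fix is to cite that theorem precisely rather than gesture at averaging; the paper itself instead derives the limit directly from the explicit formula via Laplace's method, using
\[
\beta(1-u)\int_\theta^\infty \frac{e^{-\beta v(\alpha)}}{d_1(\alpha)}\,d\alpha \;\underset{\beta\to+\infty}{\sim}\; \frac{(1-u)\,e^{-\beta v(\theta)}}{d_1(\theta)\,v'(\theta)},
\]
followed by dominated convergence, which keeps the whole proof self-contained. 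In summary: (i) for $\beta\to0$ you have a complete and even more elementary proof than the paper's; (i) for $\beta\to\infty$ is outsourceable but not proved as written; (ii), the core of the theorem, remains a heuristic whose rigorous completion is precisely what Lemma 2.1 and the explicit formula for $\chi$ accomplish in the paper.
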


In Section 2, we establish an explicit expression of the function $\chi$. In Section 3 we use it to prove Theorem 1.1.

The explicit expression of $\chi$ obtained in Proposition 2.2 allows us to calculate numerically the function $\chi$ at $u$ constant (see Figure \ref{fi:prof}). It 
is also interesting to show the sign of the function $\chi$ according to $\beta$ and $u$ as we can see in Figure \ref{fi:patate}. In this figure, we can 
see that for $(\beta,u)$ in a compact, our system explodes when $t$ goes to $+\infty$. We can see the same behaviour in the process studied in 
\cite{matt}. But this is different from what we can observe in \cite{lotka} and \cite{mal1} where the explosion happens for $\beta$ large enough.

\begin{figure}
\begin{center}
 \includegraphics[scale=.8]{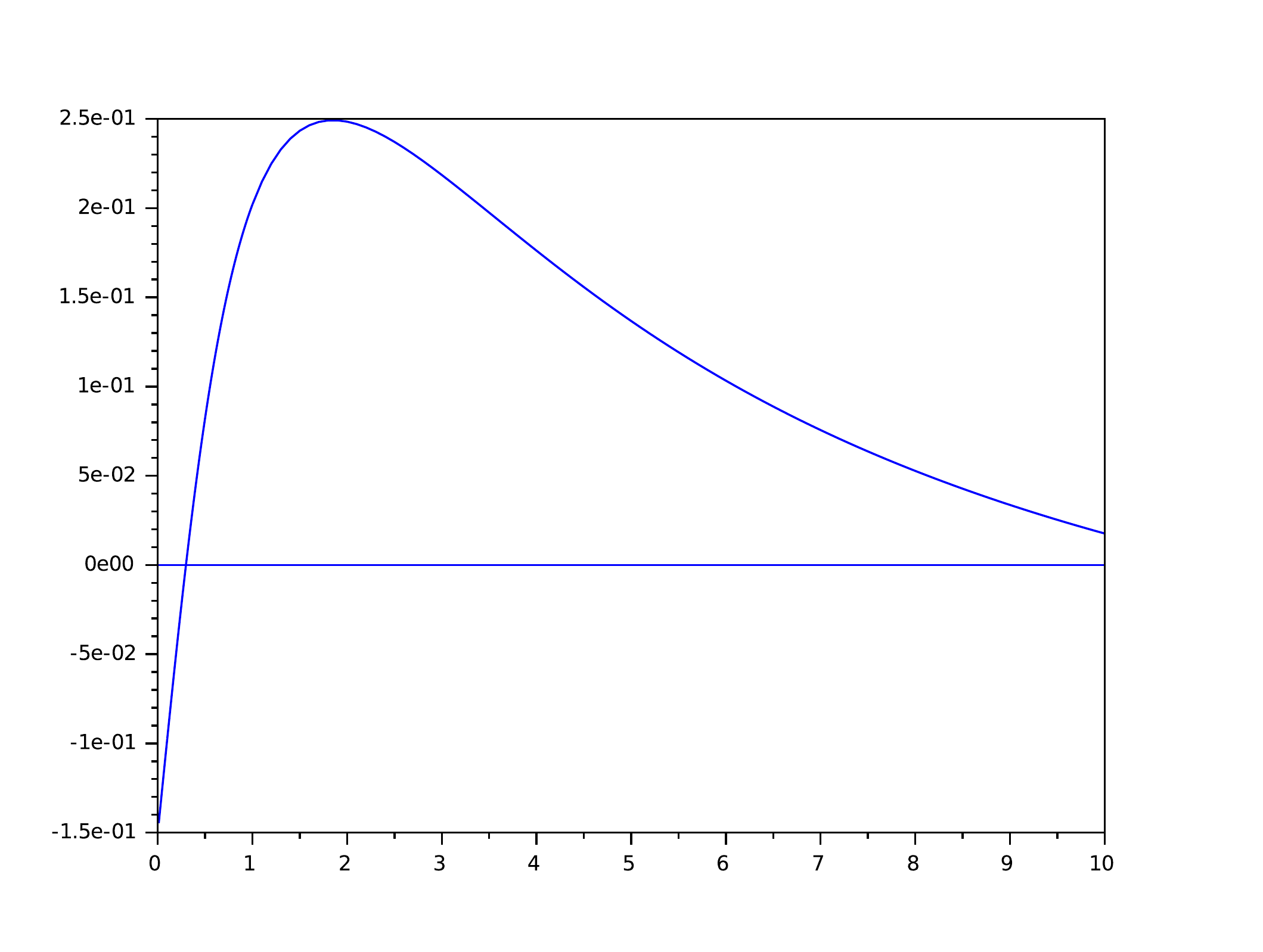}
 \caption{The function $\chi$\text{ for }$a=0.15$\text{, }$b=3$\text{ and }$u=0.5$.}
 \label{fi:prof}
\end{center}
\end{figure}

\begin{figure}
\begin{center}
 \includegraphics[scale=.5]{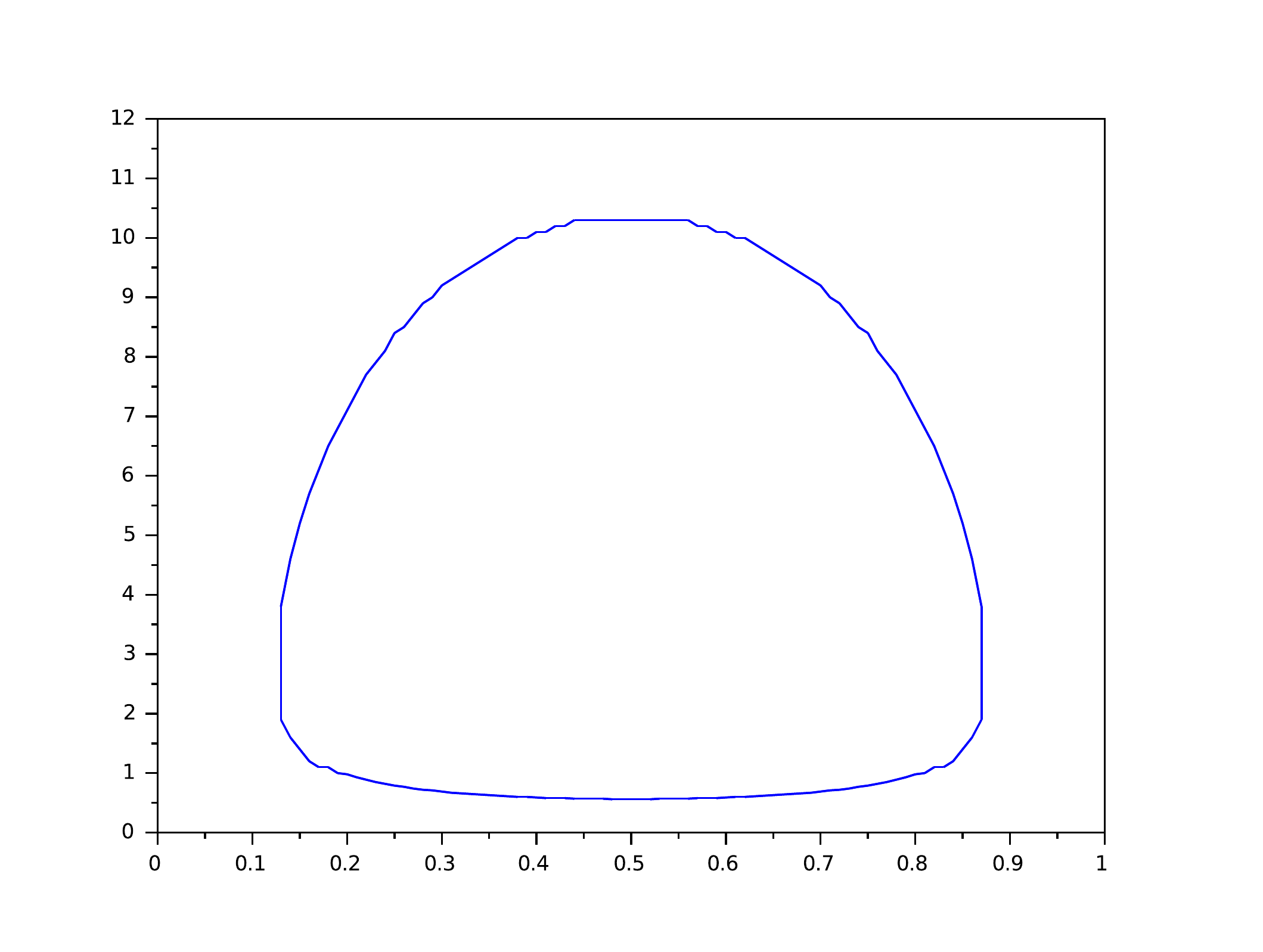}
 \caption{Sign of the function $\chi$\text{ for }$a=0.10$\text{, }$b=2.5$. Positive inside and negative outside.}
 \label{fi:patate}
\end{center}
\end{figure}

This graphics combined to Theorem 1.1 shows us that this process does not meet our expectations. There is a unique area of blow up 
whereas the deterministic periodic process has several blow up periods (see Section 4). It 
seems that, in the case of the random process, the variance of the waiting time in each state is too high to select precisely the good jump times 
that would allow it to have several blow up periods.\\

We will see in Section 5 a simple way to lower the variance of the waiting time without complicate too much the process, but first,
 in Section 4 we will give an interesting result about the deterministic process associated to our system.\\

In the last section we focus on the decentered switched system defined as follow:
\begin{equation}\label{eq:dec}
 \left\{
\begin{aligned}
 &\dot{X}_t = A_{I_t}(X_t-b_{I_t}) \\
 &X_0 = x_0,
\end{aligned}
\right.
\end{equation}
where $b_0$, $b_1$ are two different vectors of $\dR^d$.\\
There is a connection between the existence of an explosive trajectory (with a deterministic control) and 
the properties of the invariant measure, if it exists, in the random case (exponential switch times).

Let us call $T_0 = 0$, $T_1$, ..., $T_n$ the jumping times of the process $(X_t)$ and $\tau_0$, $\tau_1$, ..., $\tau_n$,  
the interjump times following an exponential law of parameters $\lambda_0$ and $\lambda_1$ alternatively. Let us define $Y_n = X_{T_{2n}}$. By simply solving the equation \ref{eq:dec} between each 
jumping times we obtain:
\[
 Y_n = e^{\tau_{n+1} A_1}e^{\tau_n A_0} Y_{n-1} - e^{\tau_{n+1}} b_0 + b_1.
\]
This kind of stochastic equation have been the object of numerous research. Applying Kesten's renewal theorem to the discrete process $(Y_n)$ will give some informations about the tail of the invariant measure of $(X_t)$ 
(see \cite{Kesten1973} for the main article, see \cite{gerold1}, \cite{desa} for different formulations of the same theorem).
\begin{thm}
If there is no deterministic control that makes the deterministic process associated to $(Y_t$) explodes, it means that there 
exists an invariant measure $\mu$ for our process and the support of $\mu$ is necessarily bounded.

In the other hand, we assume that $\chi < 0$ and that there exists a deterministic control such as:
\begin{equation}\label{eq:expl}
 \exists k\in 2\dN+1 \text{ such as }\forall x \in \dS_{d-1}, \exists t_0,t_1,...,t_k > 0 \text{ such as } \|e^{t_kA_{I_k}}...e^{t_1A_{I_1}}e^{t_0A_{I_0}}x\| > \|x\|.
\end{equation}
We call $B = e^{\tau_kA_{I_k}}...e^{\tau_1A_{I_1}}e^{\tau_0A_{I_0}}$ and 
$B_0$, $B_1$, ..., $B_n$ $n$ random variables i.i.d following the law of $B$. We also assume that:
\begin{equation}\label{eq:ret}
 \underset{n\geq0}{\max} \text{ } \dP \left( \frac{B_n...B_0x}{\|B_n...B_0x\|} \in U \right) >0 \text{ for all } x\in \dS_{d-1} \text{ and any open } \emptyset \neq U \subset \dS_{d-1}.
\end{equation}
\begin{equation}\label{eq:dens}
 \dP(B_{n_0}...B_0 \in .)\geq \gamma_0 \ind_{B_c(\Gamma_0)}\lambda \text{ for some } \Gamma_0 \in GL_d(\dR), \text{ } n_0 \in \dN \text{ and } c,\gamma_0>0.
\end{equation}
where $\lambda$ is the Lebesgue measure on $\dR^{n^2}$.
Under these assumptions, there exists a unique invariant measure $\mu$ for the process $(Y_t)$ and it has an heavy tail:
\[
\exists p_1>0 \text{ such as } \forall 0\leq p<p_1 \quad \dE_{\mu}[\|Y\|^p] < +\infty \text{ and } \forall p>p_1 \quad \dE_{\mu}[\|Y\|^p] = +\infty.
\]

\end{thm}

\section{Study of the angular process}
In order to study our process $((X_t,I_t))_{t\ge0}$ we use the polar coordinates for $X_t$. As a start, and as in 
\cite{matt}, let us look at the case of the deterministic process. Let $A$ be a 2-sized squared matrix and 
$x\in \dR^2\backslash\{0\}$. We set $(x_t)_{t\ge0}$ the solution of the ODE:
\[
\left\{
\begin{array}{ll}
        \dot{x}_t=Ax_t \quad\text{for all }t>0,\\
        x_0=x.
    \end{array}
\right.
\]
If $x$ is nonzero, then it is also true for any $x_t$ with $t$ positive. So we can define the polar coordinates 
$(r_t,\theta_t)$ of $x_t$. We call $e_\theta$ the unitary vector $(\cos\theta, \sin\theta)$ and $u_t = e_{\theta_t}$. Then 
$x_t = r_tu_t$. As $r_t^2=\DP{x_t,x_t}$, we obtain the relations:
\[
r_t\dot{r}_t=\DP{x_t,Ax_t}
\]
\[
A(r_tu_t) = \dot{x}_t = \dot{r}_tu_t+r_t\dot{u}_t.
\]
And then we have:
\begin{equation}
\dot{r}_t = r_t\DP{u_t,Au_t}
\end{equation}
\begin{equation}
\dot{u}_t = Au_t-\DP{u_t,Au_t}u_t.
\end{equation}
Let us write the equation (2) with the angle $\theta_t$. As $\dot{u}_t = \dot{\theta}_te_{\theta_t+\pi/2}$, by making the scalar 
product of (2) with $e_{\theta_t+\pi/2}$, we obtain:
\begin{equation}
\dot{\theta}_t=\DP{Ae_{\theta_t},e_{\theta_t+\pi/2}}.
\end{equation}

We use now the polar coordinates in order to study the process $((X_t,I_t))_{t\ge0}$. Between the jumps, the process 
follows the flow determined by $A\in\{A_0,A_1\}$. From Equation (3), the development of $\theta$ 
is deterministic and does not depend on $r$. As a consequence, the processes  $((\Theta_t,I_t))_{t\ge0}$ and $((U_t,I_t))_{t\ge0}$ 
are piecewise deterministic Markov processes on $\dR\times\{0,1\}$ and $\dS_1\times\{0,1\}$ respectively. The principal interest of the study 
of these processes lies in the fact that the development of $(R_t)_{t\ge0}$ is determined by that of the process $((\Theta_t,I_t))_{t\ge0}$ 
as shown by Equation (1). Indeed, by solving (1) between the jumps and by calling $\cA(\theta,i)=\DP{A_ie_{\theta},e_{\theta}}$, 
we obtain:
\begin{equation}
R_t = R_0\exp\left(\int_0^t \cA(\Theta_s,I_s)ds\right).
\end{equation}

So, as suggested by Equation (4), in order to study the behaviour of $(R_t)_{t\ge0}$ we are going to study the process 
$((\Theta_t,I_t))_{t\ge0}$ and particularly its invariant measure.
\begin{lem}
 The invariant measure $\mu$ of the process $((\Theta_t,I_t))_{t\ge0}$ is given by,
\[
 \mu(d\theta,i) = \rho_i(\theta)d\theta
\]
where:
\begin{align*}
& \rho_0 = \frac{\Phi}{d_0} \quad \text{and} \quad \rho_1 = \frac{C-\rho_0 d_0}{d_1}; \\
& d_0(\theta)=-b\sin^2(\theta)-\frac{1}{b}\cos^2(\theta) \quad\text{and}\quad d_1(\theta)=-\frac{1}{b}\sin^2(\theta)-b\cos^2(\theta); \\
& \Phi(\theta) = \left(K + \int_0^\theta \beta C (1-u)\frac{1}{d_1(\alpha)} e^{-\beta v(\alpha)}d\alpha \right) e^{\beta v(\theta)}; \\
& v \text{ is the primitive null in zero of the function }  -(\frac{u}{d_0}+\frac{1-u}{d_1}); \\
& K \text{ and } C \text{ are two explicit constants.}
\end{align*}
\end{lem}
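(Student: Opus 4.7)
The plan is to characterize the invariant measure through the defining identity $\mu(Lf) = 0$ for all smooth test functions, where $L$ is the infinitesimal generator of the PDMP $((\Theta_t, I_t))$:
\[
Lf(\theta, i) = d_i(\theta)\, \partial_\theta f(\theta, i) + \lambda_i\,[f(\theta, 1-i) - f(\theta, i)].
\]
A direct computation from $\dot\theta = \DP{A_ie_\theta, e_{\theta+\pi/2}}$ and the explicit form of $A_0, A_1$ gives the expressions of $d_0, d_1$ announced in the lemma; in particular both functions are strictly negative and $\pi$-periodic, so the angular process is actually defined on $\dR/\pi\dZ$. Writing the candidate density as $\mu(d\theta, i) = \rho_i(\theta)\, d\theta$ with $\rho_i$ smooth and $\pi$-periodic, an integration by parts of $\mu(Lf)=0$ against arbitrary $f$ collapses to the coupled linear system
\begin{align*}
(d_0 \rho_0)' + \beta u\, \rho_0 - \beta(1-u)\, \rho_1 &= 0, \\
(d_1 \rho_1)' + \beta(1-u)\, \rho_1 - \beta u\, \rho_0 &= 0.
\end{align*}

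Adding the two equations yields the flux conservation identity $d_0 \rho_0 + d_1 \rho_1 \equiv C$ for some constant $C$, which already gives $\rho_1 = (C - d_0 \rho_0)/d_1$. Substituting this back into the first equation and setting $\Phi := d_0 \rho_0$ reduces the system to a single scalar first-order linear ODE,
\[
\Phi'(\theta) + \beta\left(\frac{u}{d_0(\theta)} + \frac{1-u}{d_1(\theta)}\right)\Phi(\theta) = \frac{\beta(1-u)\, C}{d_1(\theta)}.
\]
With $v$ the primitive of $-\bigl(\frac{u}{d_0} + \frac{1-u}{d_1}\bigr)$ vanishing at $0$, the factor $e^{-\beta v}$ is integrating and transforms the left-hand side into $(e^{-\beta v}\Phi)'$; integrating from $0$ to $\theta$ and setting $K := \Phi(0)$ produces precisely the formula announced for $\Phi$, whence those for $\rho_0 = \Phi/d_0$ and $\rho_1 = (C - \Phi)/d_1$.

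It then remains to pin down $K$ and $C$ and to check positivity. The $\pi$-periodicity of $\Phi$ supplies one linear relation between $K$ and $C$ via $\Phi(\pi) = \Phi(0)$, and the normalization $\int_0^\pi(\rho_0 + \rho_1)\, d\theta = 1$ supplies a second; this is a $2 \times 2$ linear system whose nondegeneracy I expect to be the main technical point, relying on the fact that $v(\pi) \neq 0$ because $v'$ has constant sign over a period. Nonnegativity of $\rho_0, \rho_1$ is then automatic from $d_0, d_1 < 0$ together with $C = \Phi + d_1 \rho_1 \leq \Phi \leq 0$ once the signs of $K$ and $C$ are fixed. Uniqueness of the invariant measure within this smooth, $\pi$-periodic ansatz follows from the usual ergodicity arguments for PDMPs with a strictly monotone deterministic flow on the circle, as used in \cite{matt}.
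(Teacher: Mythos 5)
Your proposal is correct and follows the paper's skeleton almost step for step: same generator, the same two stationarity equations (the paper obtains them by testing $\mu(Lf)=0$ against $f(\theta,i)=f(\theta)$ and against $f(\theta,0)=f(\theta)$, $f(\theta,1)=0$, which is exactly your Fokker--Planck system), the same conservation law $d_0\rho_0+d_1\rho_1=C$, and the same integrating factor $e^{-\beta v}$. The one genuine divergence is how the constants are pinned down: you impose the periodicity constraint $\Phi(\pi)=\Phi(0)$, which gives $K/C=-\beta(1-u)\bigl(1-e^{-\beta\pi}\bigr)^{-1}\int_0^\pi e^{-\beta v(\alpha)}/d_1(\alpha)\,d\alpha$, whereas the paper extends $v$ to $\dR^+$ and squeezes the positivity condition $C<\Phi<0$ as $\theta\to+\infty$ to obtain $K/C=-\beta(1-u)\int_0^\infty e^{-\beta v(\alpha)}/d_1(\alpha)\,d\alpha$; the two agree because $v(\alpha+\pi)=v(\alpha)+\pi$ turns the infinite integral into a geometric series over one period (an identity the paper uses explicitly in Section 3). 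Your route is the more transparent linear one and makes the nondegeneracy explicit via $v(\pi)=\pi\neq 0$; the paper's route buys the sign condition $C<\Phi<0$ simultaneously, which you instead declare ``automatic'' --- it is not quite: $\Phi\leq 0$ is immediate once $C<0$, but $\Phi\geq C$ (i.e.\ $\rho_1\geq 0$) needs the small estimate $\beta(1-u)\,e^{\beta v(\theta)}\int_\theta^{\infty} e^{-\beta v(\alpha)}/\abs{d_1(\alpha)}\,d\alpha\leq 1$, which follows from $\beta(1-u)/\abs{d_1}\leq \beta v'$ and integrating; the paper is equally terse here (``reciprocally, we check\dots''), so this is a shared, fillable gap rather than an error. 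Two cosmetic discrepancies: you normalize over $[0,\pi]$ (viewing the angle on $\dR/\pi\dZ$), while the paper works with $2\pi$-periodic densities and $\int_0^{2\pi}(\rho_0+\rho_1)\,d\theta=1$, so your $K$ and $C$ differ from the paper's by a factor of $2$; and uniqueness holds among all invariant measures (by the irreducibility the paper deduces from $d_0,d_1<0$), not merely within your smooth periodic ansatz.
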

\begin{proof}

We define, for $i\in\{0,1\}$,
\[
d_i(\theta) = \DP{A_ie_{\theta},e_{\theta+\pi/2}}.
\]
We call $L$ the infinitesimal generator associated to the semi-group $P_t$ of the process $((\Theta_t,I_t))_{t\ge0}$. Remind that 
$P_t = E[f(\Theta_t,I_t)|\Theta_0 = \theta,I_0=i]$. $L$ is given by:
\begin{equation}
Lf(\theta,i) = d_i(\theta)\frac{\partial f}{\partial\theta}(\theta,i)+\lambda_i\left(f(\theta,1-i)-f(\theta,i)\right)
\end{equation}
for any smooth function $f$.\\
In our study case, the functions $d_i$ are given by,
\[
d_0(\theta)=-b\sin^2\theta-\frac{1}{b}\cos^2\theta \quad\text{and}\quad d_1(\theta)=-\frac{1}{b}\sin^2\theta-b\cos^2\theta.
\]
We notice that the functions $d_i$ are negative. This obviously implies that the process $((U_t,I_t))_{t\ge0}$ is recurrent, irreducible
 and it has a unique invariant measure. We want to write it 
as follows:
\begin{equation}
\mu(d\theta,i) = \rho_i(\theta)d\theta .
\end{equation}
where $\rho_0$ and $\rho_1$ are two $2\pi$-periodical continuous functions.\\
In this case, for every good functions $f$ defined on $\dS_1\times\{0,1\}$, we will have:
\[
\int_{\dS_1\times\{0,1\}} L f(\theta,i) d\mu(\theta,i) = 0.
\]
Let $f$ be a $\cC^1$ function defined on $\dS_1\times\{0,1\}$. Injected in the previous formula, the expression (6) allows us 
to obtain:
\begin{equation}
\int_0^{2\pi} L f(\theta,0)\rho_0(\theta)d\theta + \int_0^{2\pi} L_\beta f(\theta,1)\rho_1(\theta)d\theta = 0.
\end{equation}
Assume initially that $\forall i$, $f(\theta,i)=f(\theta)$. So, by injecting in (6), and after integrating by parts, we obtain:
\begin{equation}
d_0\rho_0 + d_1\rho_1 = C
\end{equation}
where $C$ is a negative constant depending only of the parameters of the problem $a$, $b$, $u$ and $\beta$.\\
Assume now that $f(\theta,0)=f(\theta)$ and $f(\theta,1)=0$. The same way, we obtain the relation:
\[
(d_0\rho_0)'=(1-u)\beta\rho_1-u\beta\rho_0.
\]
Set $\Phi = d_0\rho_0$. By using the relation (7), we can write the last equation as a linear differential equation:
\begin{equation}
\Phi' = -\beta\Phi(\frac{u}{d_0}+\frac{1-u}{d_1}) + \frac{\beta C (1-u)}{d_1}.
\end{equation}
This differential equation is easily solved, we obtain:
\[
\Phi(\theta) = \left(K + \int_0^\theta \beta C (1-u)\frac{1}{d_1(\alpha)} e^{-\beta v(\alpha)}d\alpha \right) e^{\beta v(\theta)}
\]
where $K$ is an integration constant and $v$ is the primitive null in zero of the function  $-(\frac{u}{d_0}+\frac{1-u}{d_1})$. 
We can calculate $v$ explicitly on the interval $(-\frac{\pi}{2},\frac{\pi}{2})$:
\[
v(\theta) = u\arctan(b\tan(\theta))+(1-u)\arctan(\frac{1}{b}\tan(\theta)).
\]
We extend $v$ to $\dR^+$ by continuity.\\
The constants $C$ and $K$ are left to calculate in such a way that $\mu$ is a probability measure. So:
\[
\text{(i)  }\rho_0 \text{ and } \rho_1 \text{ are non negative,} \quad
\text{(ii) }\int_0^{2\pi}\rho_0(\theta)d\theta + \int_0^{2\pi}\rho_1(\theta)d\theta= 1.
\]
The condition (i) gives us the following condition on $\Phi$: $C<\Phi<0$. By developping this inequality, we obtain:
\[
\forall \theta \in \dR^+, \text{ }e^{-\beta v(\theta)}-\int_0^\theta \beta(1-u)\frac{1}{d_1(\alpha)} e^{-\beta v(\alpha)}d\alpha > \frac{K}{C} > -\int_0^\theta \beta(1-u)\frac{1}{d_1(\alpha)} e^{-\beta v(\alpha)}d\alpha .
\]
As $\theta \longrightarrow +\infty$, we obtain:
\[
\frac{1}{\kappa}=\frac{K}{C}= -\beta(1-u)\int_0^\infty \frac{1}{d_1(\alpha)} e^{-\beta v(\alpha)}d\alpha.
\]
In order to find $K$, we use the condition (ii) and we get easily:
\[
\frac{1}{K} = \int_0^{2\pi} \left[ e^{\beta v(\theta)} \left( 1 + \kappa\beta(1-u) \int_0^\theta \frac{1}{d_1(\alpha)} e^{-\beta v(\alpha)}d\alpha \right)\left(\frac{1}{d_0(\theta)}-\frac{1}{d_1(\theta)}\right)+\kappa \frac{1}{d_1(\theta)}\right]d\theta.
\]
We finally get expressions of $\rho_0$ and $\rho_1$. Reciprocally, we check that the functions we obtained are solutions of our 
problem (one has to check the $2\pi$-periodicity of these functions).
\end{proof}

We want now to be able to describe the stability of our switched process according to the jump parameters $u$ and $\beta$, and to the parameters 
of our matrices $a$ and $b$. It appears thanks to Formula (4) that the following quantity:
\[
 \chi(\beta) = \underset{t \rightarrow +\infty}{\lim} \frac{1}{t}\log\left(\frac{R_t}{R_0}\right) = \underset{t \rightarrow +\infty}{\lim} \frac{1}{t} \left(\int_0^t \cA(\Theta_s,I_s)ds\right)
\]
is worth studying because not only its sign gives the behaviour in large time of our process but also, thanks to the Ergodic 
Theorem and the previous Lemma, we will obtain an explicit expression of $\chi$.
\begin{prop}
 The function $\chi$ can be calculated explicitly according to the data of the problem:
\[
 \chi = -a - (1-u) \beta \kappa K \frac{(b-\frac{1}{b})}{2} \int_0^{2\pi} \sin(2\theta)\left(\frac{1}{d_0(\theta)}+\frac{1}{d_1(\theta)}\right)e^{\beta v(\theta)} \left(\int_\theta^\infty \frac{e^{-\beta v(\alpha)}}{d_1(\alpha)} d\alpha\right)d\theta.
\]
\end{prop}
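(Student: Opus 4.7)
The plan is to apply the ergodic theorem to the PDMP $((\Theta_t,I_t))_{t\ge 0}$. By Lemma~2.1, this process admits a unique invariant probability measure $\mu(d\theta,i)=\rho_i(\theta)\,d\theta$, and since the $d_i$ are strictly negative the process is recurrent and irreducible on $\dS_1\times\{0,1\}$. Consequently the time average of $\cA(\Theta_s,I_s)$ converges almost surely to its space average:
\[
\chi \;=\; \int_{\dS_1\times\{0,1\}} \cA(\theta,i)\,d\mu(\theta,i) \;=\; \int_0^{2\pi}\cA(\theta,0)\rho_0(\theta)\,d\theta + \int_0^{2\pi}\cA(\theta,1)\rho_1(\theta)\,d\theta.
\]
So the strategy is to carry out this integration explicitly, using the formulas from Lemma~2.1.

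The first step is to compute $\cA(\theta,i)=\DP{A_ie_\theta,e_\theta}$ by direct calculation. A quick expansion gives $\cA(\theta,0)=-a+\tfrac{1}{2}(b-\tfrac1b)\sin(2\theta)$ and $\cA(\theta,1)=-a-\tfrac{1}{2}(b-\tfrac1b)\sin(2\theta)$. Since $\int\rho_0+\int\rho_1=1$, the constant $-a$ factors out cleanly and one obtains
\[
\chi \;=\; -a + \frac{b-\tfrac1b}{2}\int_0^{2\pi}\sin(2\theta)\bigl(\rho_0(\theta)-\rho_1(\theta)\bigr)\,d\theta.
\]
The second step is to express $\rho_0-\rho_1$ from the identities of Lemma~2.1: writing $\rho_0=\Phi/d_0$ and, using the relation $d_0\rho_0+d_1\rho_1=C$, $\rho_1=(C-\Phi)/d_1$, one gets $\rho_0-\rho_1=\Phi\bigl(\tfrac{1}{d_0}+\tfrac{1}{d_1}\bigr)-\tfrac{C}{d_1}$. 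The term $\int_0^{2\pi}\sin(2\theta)/d_1(\theta)\,d\theta$ vanishes by parity: $d_1$ is even in $\theta$ (it depends only on $\sin^2\theta,\cos^2\theta$) while $\sin(2\theta)$ is odd.

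The last step, which will be the main computational obstacle, is to rewrite $\Phi$ in the tail form that appears in the stated formula. Starting from
\[
\Phi(\theta) \;=\; K e^{\beta v(\theta)} + K\beta(1-u)\kappa\, e^{\beta v(\theta)}\int_0^\theta\frac{e^{-\beta v(\alpha)}}{d_1(\alpha)}\,d\alpha,
\]
(using $C=\kappa K$), the defining identity $\tfrac{1}{\kappa}=-\beta(1-u)\int_0^\infty\tfrac{e^{-\beta v(\alpha)}}{d_1(\alpha)}\,d\alpha$ lets me replace $\int_0^\theta$ by $-\tfrac{1}{\kappa\beta(1-u)}-\int_\theta^\infty$. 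The first piece cancels the $Ke^{\beta v(\theta)}$ term exactly and leaves
\[
\Phi(\theta) \;=\; -K\kappa\beta(1-u)\,e^{\beta v(\theta)}\int_\theta^\infty\frac{e^{-\beta v(\alpha)}}{d_1(\alpha)}\,d\alpha.
\]
Plugging this back into the integral for $\chi$ yields exactly the claimed formula. The delicate points are the sign bookkeeping in the final cancellation and ensuring absolute integrability so the ergodic average is valid, but the former follows mechanically and the latter follows from the negativity of the $d_i$, which makes $v$ grow linearly at infinity and guarantees exponential decay of $e^{-\beta v}$.
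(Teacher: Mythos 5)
Your proof is correct and takes essentially the same approach as the paper: the ergodic theorem reduces $\chi$ to $\int \cA(\theta,i)\,d\mu(\theta,i)$, and the invariant density from Lemma~2.1 yields the stated formula. The paper leaves the algebra implicit, and your explicit computations (the expansion $\cA(\theta,i)=-a\pm\tfrac12(b-\tfrac1b)\sin(2\theta)$, the parity cancellation of the $C/d_1$ term, and the rewriting of $\Phi$ in tail form via $C=\kappa K$ and the defining identity for $\kappa$) correctly supply the omitted steps and land exactly on the claimed expression.
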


\begin{proof}
 Since we have:
\[
 \frac{1}{t} \log (\frac{R_t}{R_0}) = \frac{1}{t} \left(\int_0^t \cA(\Theta_s,I_s)ds\right).
\]
The Ergodic Theorem tells us that (we identify $\theta$ with $e_{\theta}$):
\[
\frac{1}{t} \left(\int_0^t \cA(\Theta_s,I_s)ds\right) \underset{t\longrightarrow \infty}{\longrightarrow} \int_0^{2\pi} \cA(\theta,i)d\mu(\theta,i) = \chi.
\]
Lemma 2.1 gives us the explicit formulation of the invariant measure of the process $((\Theta_t,I_t))_{t\ge0}$ which allows us 
to obtain the claimed expression for $\chi$.
\end{proof}

\section{Stability of the switched process}

In the last section, we obtained the explicit expression of $\chi$. We are now going to give some results on some of the asymptotic values of 
$\chi$.
\begin{thm}
 For any $u \in (0,1)$,
\begin{align*}
&(i) \quad \forall a,b >0 \quad \chi(a,b,\beta,u) \longrightarrow -a \text{ when } \beta \rightarrow 0 \text{ or } \beta \rightarrow +\infty. \\
&(ii) \quad \text{For every } \beta,u>0 \text{ we can choose }a>0 \text{ and } b>0 \text{ so that }  \chi(a,b,\beta,u)>0.
\end{align*}
\end{thm}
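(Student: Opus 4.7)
The plan is to exploit the explicit formula from Proposition 2.2 together with the structural observation that $\chi + a$ is independent of $a$: every quantity appearing in the formula ($d_0$, $d_1$, $v$, $K$, $\kappa$) is built from $b$, $\beta$, $u$ only, while $a$ contributes only through the explicit $-a$ prefactor. Writing $\chi(a,b,\beta,u) = -a + I(b,\beta,u)$, part $(i)$ reduces to showing that $I(b,\beta,u)\to 0$ in both asymptotic regimes, and part $(ii)$ reduces to finding, for every fixed $\beta,u>0$, some $b>0$ with $I(b,\beta,u)>0$: one then picks any $a\in(0,I(b,\beta,u))$.

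For $(i)$ I would carry out asymptotic analysis of the nested integral. First note that $v'=-(u/d_0+(1-u)/d_1)>0$ since each $d_i$ is strictly negative, so $v$ is strictly increasing and unbounded, and the inner integral $\int_\theta^\infty e^{-\beta v(\alpha)}/d_1(\alpha)\,d\alpha$ converges. As $\beta\to\infty$, $e^{-\beta v}$ concentrates in a boundary layer of size $1/\beta$ near $\alpha=\theta$, so the inner integral is of order $1/\beta$ and the factor $e^{\beta v(\theta)}$ in the outer integrand is cancelled after rearrangement; Laplace-type estimates combined with a bounded control of $K$ and $\kappa$ yield the required vanishing. As $\beta\to 0$, writing $v(\alpha)=c\alpha+\tilde v(\alpha)$ with $c>0$ and $\tilde v$ periodic, one finds that $K$ and $\kappa$ stay of order one while the leading $1/\beta$ contribution to the outer integral is proportional to
\[
\int_0^{2\pi}\sin(2\theta)\left(\frac{1}{d_0(\theta)}+\frac{1}{d_1(\theta)}\right)d\theta.
\]
This quantity vanishes identically because $d_0(\theta+\pi/2)=d_1(\theta)$ while $\sin(2(\theta+\pi/2))=-\sin(2\theta)$: splitting $[0,2\pi]$ into four quarter-periods, consecutive contributions cancel pairwise. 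What remains is $o(1)$, giving the claim.

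For $(ii)$, the hard part, note that $b=1$ gives $A_0=A_1$ and so $I(1,\beta,u)=0$, with $I$ vanishing to second order in $b-1$ (the factor $b-1/b$ alone contributes one vanishing factor and $\rho_0-\rho_1$ contributes the other), so a local expansion at $b=1$ is not conclusive without sign analysis of the quadratic coefficient. A more promising route is the regime $b\to\infty$ with $\beta$ scaled to resonate with the natural rotation frequency: then $d_0(\theta)\sim -b\sin^2\theta$ and $d_1(\theta)\sim -b\cos^2\theta$ away from the axes, the matrices acquire a sharp orthogonal shear structure, and a direct asymptotic computation of the integral in Proposition 2.2 should produce a computable principal term whose sign can be checked to be positive. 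Continuity of $\chi$ in $(a,b,\beta,u)$ then provides an open set of admissible $(a,b)$.

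The main obstacle is part $(ii)$: the sign of a nested Laplace-type integral involving the exponential of the nontrivial primitive $v$ is not manifest, and $b$ and $\beta$ enter through $v$ in a genuinely coupled way. A clean self-contained symbolic argument seems unlikely, so the realistic strategy is either a careful asymptotic expansion in a single well-chosen scaling regime, or an indirect argument invoking the deterministic destabilization criterion of \cite{deter} and transferring it to the randomly switched process through a rare-event estimate on the Poisson switching times. Part $(i)$ requires only careful bookkeeping since several factors in the formula blow up individually and must be combined before passing to the limit.
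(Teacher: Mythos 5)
Your part (i) is essentially the paper's argument: the paper also isolates the $a$-independent integral term, resums $\int_0^\infty e^{-\beta v(\alpha)}/d_1(\alpha)\,d\alpha$ as a geometric series via the quasi-periodicity $v(\alpha+\pi)=v(\alpha)+\pi$ to show $\beta(1-u)\int_\theta^\infty e^{-\beta v}/d_1 \to \frac{1-u}{\pi}\int_0^\pi \frac{d\alpha}{d_1(\alpha)}$ as $\beta\to0$, then uses $\int_0^{2\pi}\sin(2\theta)\bigl(\frac{1}{d_0}+\frac{1}{d_1}\bigr)d\theta=0$ (your symmetry justification $d_0(\theta+\pi/2)=d_1(\theta)$ is correct and in fact more explicit than the paper's bare assertion) together with boundedness of $\kappa K$; for $\beta\to+\infty$ it is Laplace's method plus dominated convergence, exactly as you sketch.

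Part (ii), however, is a genuine gap: you correctly guess the regime $b\to\infty$ but then stop at ``should produce a computable principal term whose sign can be checked,'' hedge toward an unnecessary rescaling of $\beta$ with $b$, and finally suggest an entirely different (unexecuted) rare-event transfer from the deterministic criterion. The paper carries the $b\to\infty$ analysis out at \emph{fixed} $\beta$ and $u$, and the sign does become manifest through three concrete steps you are missing. First, fold the $\pi$-periodic integrand onto $[0,\pi/2]$ using the antisymmetry $f(\pi-\theta)=-f(\theta)$, writing $\chi=-a+K\int_0^{\pi/2}f(\theta)\bigl(g(\theta)-g(\pi-\theta)\bigr)d\theta$ with $f(\theta)=(b-\tfrac1b)\sin(2\theta)\bigl(\tfrac{1}{d_0}+\tfrac{1}{d_1}\bigr)$ of constant (negative) sign on $(0,\pi/2)$. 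Second, compute $\int_0^{\pi/2}f$ in closed form,
\[
\int_0^{\pi/2} f(\theta)\,d\theta=\frac{4}{\gamma}\,\frac{b^2-\frac{1}{b^2}}{(b-\frac{1}{b})^2}\,\log\left|\frac{\gamma-1}{\gamma+1}\right|,\qquad \gamma=\sqrt{1+\frac{4}{(b-\frac1b)^2}},
\]
which tends to $-\infty$ as $b\to\infty$. Third, observe that $v(\theta)=u\arctan(b\tan\theta)+(1-u)\arctan(\tfrac1b\tan\theta)$ converges to a step function as $b\to\infty$ while the weight $1/d_1$ concentrates at $\pi/2$, so that $g(\theta)-g(\pi-\theta)$ converges pointwise on $(0,\pi/2)$ to a positive constant (for $u=\tfrac12$, $e^{\beta\pi/4}-e^{-\beta\pi/4}$). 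Since $K<0$, the integral term is positive for $b$ large, and your own (correct) observation that $a$ enters only through the prefactor $-a$ finishes the proof by taking $a$ small. The mechanism you did not see is precisely that no resonance scaling of $\beta$ is needed: at fixed $\beta$ the limiting $v$ develops a jump at $\pi/2$, the resulting asymmetry between $g(\theta)$ and $g(\pi-\theta)$ destroys the cancellation $\int f=0$ that forces $\chi+a\to0$ in the $\beta\to0$ regime, and the divergence of $\int_0^{\pi/2}f$ makes the surviving term dominate.
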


\begin{proof}
(i) One can find an other way to prove this result in \cite{matt} in a more general way.\\
In order to study the limit of $\chi$ in $0$, we study the limit of $-\frac{1}{\kappa} = \beta (1-u)\int_0^\infty \frac{e^{-\beta v(\alpha)}}{d_1(\alpha)}d\alpha$ in $0$. 
We notice that:
\begin{align*}
\int_0^{+\infty} \frac{e^{-\beta v(\alpha)}}{d_1(\alpha)}d\alpha &= \sum_{i=0}^{+\infty} \int_{i\pi}^{(i+1)\pi} \frac{e^{-\beta v(\alpha)}}{d_1(\alpha)}d\alpha \\
								  &= \sum_{i=0}^{+\infty} e^{-\beta i \pi} \int_{0}^{\pi} \frac{e^{-\beta v(\alpha)}}{d_1(\alpha)}d\alpha \text{ because }v(\alpha +\pi)=v(\alpha)+\pi\\
								  &= \frac{1}{1-e^{-\beta \pi}} \int_{0}^{\pi} \frac{e^{-\beta v(\alpha)}}{d_1(\alpha)}d\alpha.
\end{align*}
Multiplicating by $\beta (1-u)$ and observing the presence of a rate of increase we get:
\[
\beta(1-u) \int_0^\infty \frac{e^{-\beta v(\alpha)}}{d_1(\alpha)}d\alpha \longrightarrow \frac{1-u}{\pi} \int_0^\pi \frac{1}{d_1(\alpha)}d\alpha = l.
\]
We finally can say that:
\[
\forall \theta,\quad\left( \beta(1-u) \int_\theta^\infty \frac{e^{-\beta v(\alpha)}}{d_1(\alpha)}d\alpha \right) \underset{\beta \longrightarrow 0}{\longrightarrow} l.
\]
Moreover, as:
\[
\int_0^{2\pi} \sin(2\theta)\left(\frac{1}{d_0(\theta)}+\frac{1}{d_1(\theta)}\right)d\theta = 0
\]
and the product $\kappa K$ is bounded as $\beta \rightarrow 0$, we deduce that $\chi(\beta) \longrightarrow 0$ when $\beta$ goes to $0$.\\
The limit of $\chi$ at $+\infty$ is obtained using the Laplace method:
\[
\forall \theta,\text{  } \beta(1-u) \int_\theta^\infty \frac{e^{-\beta v(\alpha)}}{d_1(\alpha)}d\alpha \underset{+\infty}{\sim} \frac{(1-u)e^{-\beta v(\theta)}}{d_1(\theta)v'(\theta)}.
\]
From this relation, we obtain that $\kappa$ and $K$ converge when $\beta$ goes to $+\infty$ and, by dominated convergence,
\[
\int_0^{2\pi} \sin(2\theta)\left(\frac{1}{d_0(\theta)}+\frac{1}{d_1(\theta)}\right)e^{\beta v(\theta)} \beta(1-u) \left(\int_\theta^\infty \frac{e^{-\beta v(\alpha)}}{d_1(\alpha)} d\alpha\right)d\theta \underset{\beta \rightarrow +\infty}{\longrightarrow} 0.
\]
So we finally conclude that $\chi$ goes to $-a$ when $\beta$ goes to $+\infty$.\\
(ii) We will now show that for any parameter $\beta$, we can choose $b$ large enough and $a$ small enough so that the process goes to $+\infty$
almost surely (ie $\chi(\beta) > 0$). We can write the function $\chi$ as follows:
\[
\chi(\beta) = -a + K \int_0^{\pi} f(\theta) g(\theta) d\theta
\]
because the function under the integral is $\pi$-periodic and where:
\begin{align*}
& f(\theta) = (b-\frac{1}{b}) \sin(2\theta) \left( \frac{1}{d_0(\theta)}+\frac{1}{d_1(\theta)} \right),\\
& g(\theta) = e^{\beta v(\theta)} \frac{\int_{\theta}^{+\infty} \frac{e^{-\beta v(\alpha)}}{d_1(\alpha)} d\alpha}{\int_{0}^{+\infty} \frac{e^{-\beta v(\alpha)}}{d_1(\alpha)} d\alpha}.
\end{align*}
Finally we write $\chi$ as follows:
\[
 \chi(\beta) = -a + K \int_0^{\frac{\pi}{2}} f(\theta) \left(g(\theta) - g(\pi-\theta)\right) d\theta
\]
A straightforward computation leads to:
\[
 \int_0^{\frac{\pi}{2}} f(\theta)d\theta = \frac{4}{\gamma} \frac{b^2-\frac{1}{b^2}}{(b-\frac{1}{b})^2}  \log \left( \abs{\frac{\gamma -1}{\gamma +1} } \right)
\]
where $\gamma = \sqrt{1+\frac{4}{(b-\frac{1}{b})^2}}$. We then conclude that:
\[
 \int_0^{\frac{\pi}{2}} f(\theta)d\theta \longrightarrow -\infty \text{ when } b \text{ goes to}+\infty.
\]
In order to determine the limit of $g$ when $b$ goes to $+\infty$ we rewrite $g$ as follows using the same method as before:
\begin{align*}
g(\theta) &= e^{\beta v(\theta)} \left( 1 - (1-e^{-\beta \pi}) \frac{\int_0^{\theta} \frac{1}{d_1(\alpha)} e^{-\beta v(\alpha)}d\alpha}{\int_0^{\pi} \frac{1}{d_1(\alpha)} e^{-\beta v(\alpha)}d\alpha} \right)\\
&= e^{\beta v(\theta)} \left( 1 - (1-e^{-\beta \pi}) \frac{\int_0^{\theta} \frac{1}{\cos^2(\alpha)+\frac{1}{b^2}\sin^2(\alpha)} e^{-\beta v(\alpha)}d\alpha}{\int_0^{\pi} \frac{1}{\cos^2(\alpha)+\frac{1}{b^2}\sin^2(\alpha)} e^{-\beta v(\alpha)}d\alpha} \right).
\end{align*}
Using the fact that:
\begin{align*}
&v(\theta) \underset{b \rightarrow +\infty}{\longrightarrow}  \frac{\pi}{4} \text{ if }\theta \in (0,\frac{\pi}{2})\\
&v(\theta) \underset{b \rightarrow +\infty}{\longrightarrow}  \frac{3\pi}{4} \text{ if }\theta \in (0,\frac{\pi}{2}),\\
\end{align*}
it is now easy to see that:
\begin{align*}
&g(\theta) \underset{b \rightarrow +\infty}{\longrightarrow} e^{\beta \frac{\pi}{4}} \text{ if }\theta \in (0,\frac{\pi}{2})\\
&g(\pi-\theta) \underset{b \rightarrow +\infty}{\longrightarrow} e^{-\beta \frac{\pi}{4}} \text{ if }\theta \in (0,\frac{\pi}{2}).\\
\end{align*}
Since $K<0$, we can conclude that for $b$ large enough,
\[
 K\int_0^{\frac{\pi}{2}} f(\theta) \left(g(\theta) - g(\pi-\theta)\right) d\theta > 0.
\]
So for an appropriate choice of $a$, we see that $\chi(\beta) > 0$ inducing that the process explodes almost surely for this $\beta$ and $a$ with $b$ large enough.
\end{proof}

\section{A quick look at the associated deterministic process}

The deterministic switched system can be introduced as follows:
\begin{equation}\label{eq:ode2}
\begin{cases}
x_0 \in \dR^2\backslash{(0,0)}\\
\dot{x_t} = A_{v_t} x_t
\end{cases}
\end{equation}
where $(v_t)\in \cF(\dR_+,\{0,1\})$ is the control function. The behaviour of these systems is well known, see \cite{deter} for more details. 
Consequently, for our choice of matrices, we know that there exist a worst trajectory, that is to say, a choice of the control function 
$v$ that makes the system explode, and this control is periodic.\\

Let us define $(v_t)$ a $\frac{1}{u(1-u)\beta}$-periodical process satisfying:
\begin{align*}
&\forall t \in [0,\frac{1}{u\beta}), \quad v_t = 0\\
&\forall t \in [\frac{1}{u\beta},\frac{1}{u(1-u)\beta}), \quad v_t = 1.
\end{align*}
We denote $\chi^d(\beta) = \underset{t\longrightarrow \infty}{\text{lim}} \frac{1}{t}\log \left(\frac{\|x_t\|}{\|x_0\|}\right)$.\\
By simply solving the equation for the process $(x_t)$ and calculating the matrices exponential we obtain:
\[
x_t = e^{-at}B_{\xi_{N(t)+1}}(a_t)B_{\xi_{N(t)}}(\tau_{N(t)})\cdots B_{\xi_1}(\tau_1)
\]
where 
\[
B_0(t) = e^{tA_0} = \begin{pmatrix}
 \cos(t) & b\sin(t)\\
 -\frac{1}{b}\sin(t) & \cos(t)
\end{pmatrix}
\quad\text{and}\quad
B_1(t) = e^{tA_1} = \begin{pmatrix}
 \cos(t) & \frac{1}{b}\sin(t)\\
 -b\sin(t) & \cos(t)
\end{pmatrix},
\]
$N(t)$ is the number of jumps of the deterministic 
process before $t$ and $\tau_1$, $\tau_2$, ...,$\tau_{N(t)}$ the time spent in the states $\xi_1$, $\xi_2$, ..., $\xi_{N(t)}$, $\xi_{N(t)+1}$ by $v(t)$.\\
We have the surprising following result:
\begin{prop}
 $\chi^d(\beta)$ almost surely does not depend on the initial value $x_0$.
\end{prop}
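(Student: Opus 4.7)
The plan is to apply Floquet theory for this periodic linear system. First I note that the control $v_t$ is periodic with period $T = 1/(u(1-u)\beta) = T_0 + T_1$, spending time $T_0 = 1/(u\beta)$ in state $0$ followed by $T_1 = 1/((1-u)\beta)$ in state $1$ per cycle. Setting $M = B_1(T_1) B_0(T_0)$ with $B_i$ as defined in the excerpt, the flow sampled at multiples of the period satisfies $x_{nT} = e^{-anT} M^n x_0$, so the long-time exponential rate of $\|x_t\|$ is controlled entirely by the spectrum of $M$.

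The second ingredient is the structural observation that the displayed $B_0(t)$ and $B_1(t)$ both have determinant one (the trace $-2a$ of $A_0$ and $A_1$ has been pulled out into the $e^{-at}$ prefactor). Hence $\det M = 1$, and the two eigenvalues $\mu_1, \mu_2$ of $M$ satisfy $\mu_1\mu_2 = 1$: either they form a complex-conjugate pair on the unit circle (so $\rho(M) = 1$), or they are real with $|\mu_1| > 1 > |\mu_2|$, or they coincide at $\pm 1$. In all three cases the common value of the Lyapunov exponent is $\log\rho(M)/T$.

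I would then invoke the elementary $2\times 2$ linear algebra fact that
\[
\frac{1}{n}\log\|M^n x_0\| \longrightarrow \log\rho(M)
\]
for every $x_0 \in \dR^2\setminus\{0\}$ outside the (possibly empty) one-dimensional eigenspace of the smaller eigenvalue, which in the real-distinct case is a line through the origin and therefore carries no Lebesgue mass in $\dR^2$. Combining this with the decay prefactor gives
\[
\chi^d(\beta) = -a + \frac{\log\rho(M)}{T} \qquad \text{for Lebesgue-a.e.\ } x_0,
\]
which is exactly what the proposition claims once ``almost surely'' is read against Lebesgue measure on $\dR^2$.

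Finally, to pass from the arithmetic subsequence $(nT)_n$ to continuous time $t$, I would use that $B_0$ and $B_1$ are continuous on the compact intervals $[0, T_0]$ and $[0, T_1]$, so there exist constants $c, C > 0$ with $c\|x_{nT}\| \leq \|x_t\| \leq C\|x_{nT}\|$ whenever $t \in [nT, (n+1)T)$; the resulting correction to $(1/t)\log\|x_t\|$ is of order $O(1/t)$ and vanishes. No step in the plan looks like a genuine obstacle; the only subtle point is the exceptional measure-zero eigenline in the real-distinct case, and this is exactly what the ``almost surely'' wording of the statement accommodates.
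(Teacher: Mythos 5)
Your argument is correct and, at heart, it is the same argument as the paper's: both reduce $\chi^d$ to a spectral analysis of the period (monodromy) matrix and identify the exceptional initial conditions as the eigenline of the smaller eigenvalue, a Lebesgue-null line through the origin, which is exactly the sense in which the statement's ``almost surely'' must be read (the paper's Figure 3 displays the deviant value for $x_0\in E_{\lambda_2}$). That said, your write-up is more complete than the paper's in three respects. First, the paper treats only $u=\tfrac12$, computing $B_0(\tau)B_1(\tau)$ and its characteristic polynomial $X^2+(4C^2\sin^2\tau-2)X+1$ explicitly, whereas your structural observation that $\det B_0(t)=\det B_1(t)=1$ (the trace having been pulled into the $e^{-at}$ prefactor), hence $\det M=1$ for the monodromy matrix of the general $(T_0,T_1)$-cycle, yields the reciprocal-eigenvalue trichotomy for every $u\in(0,1)$ with no computation. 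Second, the paper claims the period matrix is always diagonalizable over $\dC$, which actually fails at degenerate parameter values: when $C^2\sin^2\tau=1$ the matrix has double eigenvalue $-1$ but nonvanishing off-diagonal entries, so it is a nontrivial Jordan block; your formulation via the spectral radius, under which $\frac1n\log\|M^nx_0\|\to\log\rho(M)$ still holds for all $x_0\neq0$ (polynomial growth being subexponential), covers this case cleanly and even shows the limit is independent of $x_0$ without exception there. Third, you supply the interpolation from the subsequence $t=nT$ to continuous time via compactness of the one-period flow, a step the paper passes over silently when asserting convergence along $n$. So: same key idea, but your version is more general and closes genuine small gaps in the paper's proof.
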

\begin{proof}
Although this result is true for any $u\in(0,1)$, for sake of simplicity, we will prove it only for the case $u=\frac{1}{2}$ where the computation is not heavy. 
In order to prove this result, we first notice that:
\[
 \frac{1}{2nt} \log \left( \frac{\|[B_0(\tau)B_1(\tau)]^nX_0\|}{\|X_0\|} \right) \underset{n \rightarrow +\infty}{\longrightarrow} \chi_d(\beta)
\]
where $\tau = \frac{2}{\beta}$. Let us calculate the matrix:
\[
 B_0(\tau)B_1(\tau) = \begin{pmatrix}
 \cos^2(\tau)-b^2 \sin^2(\tau) & C\sin(2\tau)\\
 -C\sin(2\tau) & \cos^2(\tau)-\frac{1}{b^2} \sin^2(\tau)
\end{pmatrix}
\]
where $C=\frac{1}{2}(b+\frac{1}{b}) >1$. We calculate the characteristic polynomial of this matrix and we obtain:
\[
 X^2 + X (4C^2 \sin^2(\tau)-2) +1 = 0.
\]
By a simple analysis of this polynomial, we can show that there exists two real eigenvalues if $2C^2\sin^2(\tau)-1 > 0$ and two joint 
complex eigenvalues if $2C^2\sin^2(\tau)-1 < 0$, concluding that our matrix is diagonalisable in $\dC$. So there exists $P\in GL_2(\dC)$ 
such that:\
\[
 (B_1(\tau)B_0(\tau))^n = P^{-1} \begin{pmatrix}
 \lambda_1^n & 0\\
 0 & \lambda_2^n
\end{pmatrix} P.
\]
Let us now write:
\begin{align*}
 \frac{1}{2n\tau} \log \left( \frac{\|[B_0(\tau)B_1(\tau)]^nX_0\|}{\|X_0\|} \right) &= \frac{1}{2nt} \log \left( 
\| \begin{pmatrix}
 \lambda_1^n & 0\\
 0 & \lambda_2^n
\end{pmatrix} P X_0 \| \right) +o(1) \\
&= \frac{1}{2n\tau} \log \left( 
\| \begin{pmatrix}
 \lambda_1^n & 0\\
 0 & \lambda_2^n
\end{pmatrix} \begin{pmatrix}
x\\
y
\end{pmatrix} \| \right) +o(1).
\end{align*}
If the factor $4C^2 \sin^2(\tau)-2 \neq 0$ then one of the eigenvalues is superior to the other in absolute value, for example
 $|\lambda_1 | > |\lambda_2|$. In this case, if $x \neq 0$ ie $X_0 \notin E_{\lambda_2}$ the characteristic space of the eigenvalue $\lambda_2$:
\[
 \frac{1}{2n\tau} \log \left( \frac{\|[B_0(\tau)B_1(\tau)]^nX_0\|}{\|X_0\|} \right) = \frac{1}{2\tau} \log |\lambda_1| +o(1).
\]
If $x = 0$ i.e. $X_0 \in E_{\lambda_2}$:
\[
 \frac{1}{2n\tau} \log \left( \frac{\|[B_0(\tau)B_1(\tau)]^nX_0\|}{\|X_0\|} \right) = \frac{1}{2\tau} \log |\lambda_2| +o(1).
\]
If the factor $4C^2 \sin^2(\tau)-2 = 0$ then for every $X_0$:
\[
 \frac{1}{2n\tau} \log \left( \frac{\|[B_0(\tau)B_1(\tau)]^nX_0\|}{\|X_0\|} \right) = \frac{1}{2\tau} \log |\lambda_1| +o(1).
\]

\end{proof}

Figure \ref{fi:3} illustrates Proposition 4.1.

\begin{figure}
\begin{center}
 \includegraphics[scale=.8]{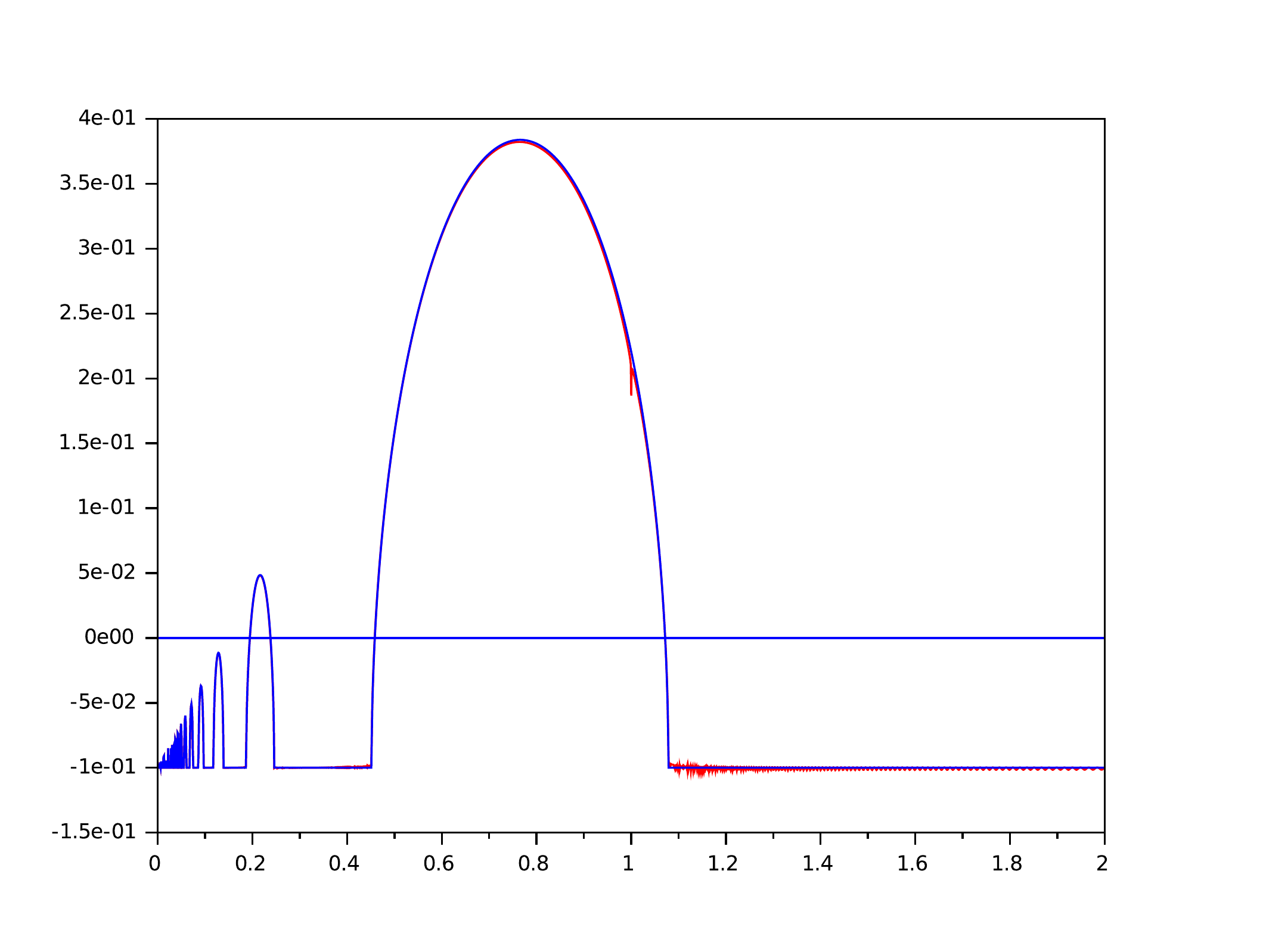}
 \caption{In blue the general shape of $\chi^d$ for $a=0.1$, $b=2$ and $u=0.5$.
\text{In red, }$x_0$\text{ is in }$E_{\lambda_2}$\text{ for }$\beta = 1$.}
\label{fi:3}
\end{center}
\end{figure}

\section{A way to obtain several blow up areas}

One of the expectations of this article was to get a planar linear switched system whose $\chi$ function admits several 
blow up areas. A quick look at the deterministic case (\ref{fi:3}) allowed us to believe that it could be possible with our system. 
Unfortunately the simulation we can see in Figure \ref{fi:prof} seems to show that we did not manage to succeed. We slightly modify the jump 
times in order to mimic the deterministic evolution \eqref{eq:ode2}.

Let $u=\frac{1}{2}$ (for sake of simplicity) and let $n$ be a stricly positive integer. We now define the piecewise deterministic Markov process $((X_t^n,I_t^n))_{t\ge0}$ as follows: $(I_t)_{t\geq 0}$ is a continuous time Markov 
process defined on the state space $\{0,1,2,...,2n-1\}$ with a constant jump rate $\frac{n\beta}{2}$ and a jump mesure:
\begin{align*}
&Q(i,.) = \delta_{i+1} \quad \text{if} \quad i<2n-1,\\
&Q(2n-1,.) = \delta_0. 
\end{align*}
And we define 
$(X_t^n)_{t\ge0}$ as the solution of:
\[
X_t = X_0^n + \int_0^t A_{\mathds{1}_{I_t < n}}X_s^nds.
\]
This process looks like \eqref{eq:ode}. The difference lies in the fact that, instead 
of waiting for an exponential time of parameter $\frac{\beta}{2}$ before jumping from a matrix to the other, the process waits for a time following a Gamma law of 
mean $\frac{2}{\beta}$ and variance $\frac{4}{n\beta^2}$.\\
The next result compares the stochastic process to the periodic one.\\

\begin{figure}
\begin{center}
 \includegraphics[scale=.8]{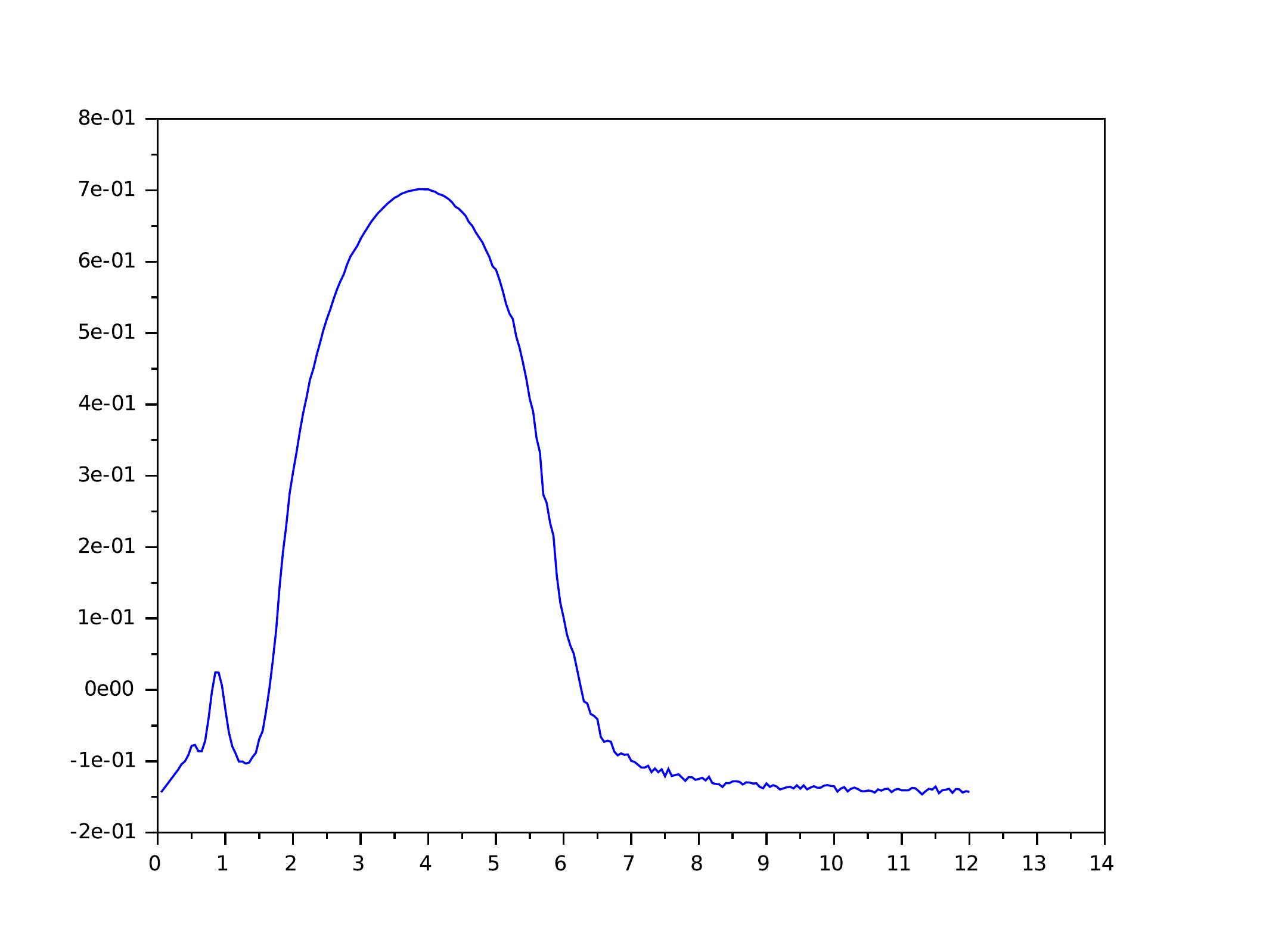}
 \caption{For $a = 0.15$, $b=3$ and $n=50$, a second blow up area appears in $\chi$.}
\label{fi:4}
\end{center}
\end{figure}

\begin{lem}
 For every $T>0$, the process $(X_t^n)_{0\leq t \leq T}$ uniformly converges to the deterministic process $(X_t)_{0\leq t \leq T}$ solution of \eqref{eq:ode2}.
\end{lem}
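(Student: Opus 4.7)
The plan is to exploit the fact that the interswitch times of $(X_t^n)$ concentrate around the deterministic period $2/\beta$ of the process $(X_t)$, and then to propagate this timing agreement to the trajectories via a Grönwall-type estimate. I aim for almost sure uniform convergence on $[0,T]$, using the strong law of large numbers for the switching times and the explicit Lipschitz control afforded by the linearity of the vector fields.

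First I would introduce the random times at which $(X_t^n)$ changes its driving matrix. Let $S_k^n$ denote the $k$-th time at which $I_t^n$ crosses between the block $\{0,\ldots,n-1\}$ (where $A_0$ drives the dynamics) and the block $\{n,\ldots,2n-1\}$ (where $A_1$ drives it). By construction, each increment $S_k^n - S_{k-1}^n$ is a sum of $n$ independent exponential variables of rate $n\beta/2$, hence is Gamma distributed with mean $2/\beta$ and variance $4/(n\beta^2)$. The strong law of large numbers yields $S_k^n \to s_k := 2k/\beta$ almost surely for each fixed $k$. Since only finitely many switches of the deterministic process fall in $[0,T]$, I deduce $\max_{k\,:\, s_k \leq T+1} |S_k^n - s_k| \to 0$ almost surely.

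Next I would compare the two trajectories between consecutive switching times. On a common-matrix interval where both processes are driven by the same $A_i$, the difference $E_t^n := X_t^n - X_t$ satisfies $\dot{E}_t^n = A_i E_t^n$, so $\|E_t^n\|$ grows at most like $e^{\|A_i\|(t-t_0)} \|E_{t_0}^n\|$. On a mismatch interval of length at most $|S_k^n - s_k|$, where the two processes are driven by different matrices, both trajectories stay in a bounded region of $\dR^2$ uniformly in $n$ and $t \in [0,T]$ (by the standard exponential bound on linear flows), so the discrepancy accrued on such an interval is bounded by a constant times $|S_k^n - s_k|$. Iterating through the bounded number of switches that occur in $[0,T]$, I obtain $\sup_{t \in [0,T]} \|X_t^n - X_t\| \leq C \max_k |S_k^n - s_k|$, which tends to $0$ almost surely.

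The main technical obstacle, though essentially routine, is the bookkeeping on the mismatch intervals: one must check that on such an interval the wrong matrix is applied only for a time of order $|S_k^n - s_k|$, so that the displacement it produces is itself $O(|S_k^n - s_k|)$, and that successive errors compound only multiplicatively rather than blowing up. The linearity of the vector fields and the a priori boundedness of the two trajectories on $[0,T]$ make the Grönwall step immediate once this is in place.
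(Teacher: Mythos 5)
Your proposal follows, in substance, the same route as the paper: both proofs reduce the lemma to the concentration of the Gamma-distributed switch times around their deterministic counterparts (variance of order $1/n$, hence vanishing) and then propagate this timing agreement through the finitely many switches occurring in $[0,T]$. Where the paper telescopes the difference of the matrix-exponential products factor by factor, bounding each factor by a constant times $\vert \tau_k^n - \tfrac{1}{\beta}\vert$ and invoking Bienaym\'e--Tchebychev, you run a Gr\"onwall-type bookkeeping over common-matrix and mismatch intervals; these are cosmetic variants of the same estimate. Your treatment of the mismatch intervals (displacement of order $\vert S_k^n - s_k\vert$ because both trajectories stay bounded on $[0,T]$) plays exactly the role of the paper's handling of the event $N^n(t)\neq N(t)$, and your observation that only finitely many switches fall in $[0,T]$ matches the paper's induction on $N(t)$.

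One point needs repair: you claim \emph{almost sure} convergence via the strong law of large numbers for $S_k^n$, but the processes $(X_t^n)$ for different $n$ are a priori defined on unrelated probability spaces --- each $S_k^n$ is a sum of $kn$ exponentials whose rate $n\beta/2$ itself changes with $n$ --- so the statement ``$S_k^n \to s_k$ almost surely'' is not meaningful without first realizing all the processes on a common space. The claim can be rescued by the natural coupling $S_k^n = \frac{2}{n\beta}\sum_{j=1}^{kn}\tilde{E}_j$ with a single i.i.d.\ sequence $(\tilde{E}_j)$ of standard exponentials, after which the SLLN along the subsequence $kn$ does give almost sure convergence; but this construction must be stated, since without it the SLLN does not apply. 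Alternatively you can downgrade the mode of convergence: Chebyshev's inequality applied to each Gamma variable gives $\max_{k\,:\,s_k\leq T+1}\vert S_k^n - s_k\vert \to 0$ in probability, and your deterministic estimate $\sup_{t\leq T}\Vert X_t^n - X_t\Vert \leq C \max_k \vert S_k^n - s_k\vert$ then yields convergence in probability, hence in $L^1$ by the uniform boundedness of the trajectories on $[0,T]$ --- which is precisely the mode in which the paper proves the lemma. With either fix the argument is complete; the rest of your bookkeeping (exponential growth on common intervals, $O(\vert S_k^n - s_k\vert)$ accrual on mismatch intervals, multiplicative compounding over a bounded number of switches) is sound.
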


\begin{proof}
Let us call $N^n(t)$ the number of jumps before $t$ of the process $(I_t^n)$, $\tau_1^n$, $\tau_2^n$, ...,$\tau_{N(t)}^n$ are the interjump times, they follow a gamma law 
of mean $\frac{1}{\beta}$ and variance $\frac{1}{n\beta^2}$. We denote $a_t^n = t - \sum_{k=1}^{N(t)} \tau_k^n$. We finally call $\xi_1^n$, $\xi_2^n$, ...,$\xi_{N(t)+1}^n\in \{0,1\}$ 
the states.\\
By simply solving the equation for the process $(X_t^n)$ and calculating the matrices exponential we obtain:
\[
X_t^n = e^{-at}B_{\xi_{N^n(t)+1}^n}(a_t^n)B_{\xi_{N^n(t)}^n}(\tau_{N^n(t)}^n)\cdots B_{\xi_1^n}(\tau_1^n)
\]
where 
\[
B_0(t) = \begin{pmatrix}
 \cos(t) & b\sin(t)\\
 -\frac{1}{b}\sin(t) & \cos(t)
\end{pmatrix}
\quad\text{and}\quad
B_1(t) = \begin{pmatrix}
 \cos(t) & \frac{1}{b}\sin(t)\\
 -b\sin(t) & \cos(t)
\end{pmatrix}.
\]
Let us call $N(t)$ the number of jumps of the deterministic process. We have:
\[
X_t = e^{-at}B_{\xi_{N(t)+1}}(a_t)B_{\xi_{N(t)}}(\frac{1}{\beta})\cdots B_{\xi_1}(\frac{1}{\beta}).
\]
We form the difference:
\begin{align*}
\| X_t^n - X_t \| \leq\text{ } & \mathds{1}_{N^n(t)=N(t)} \| B_{\xi_{N(t)+1}^n}(a_t^n)\cdots B_{\xi_1^n}(\tau_1^n) - B_{\xi_{N(t)+1}}(a_t)\cdots B_{\xi_1}(\frac{1}{\beta}) \|\|X_0\| \\
                      +& \mathds{1}_{N^n(t)\neq N(t)} \|B_{\xi_{N^n(t)+1}^n}(a_t^n)\cdots B_{\xi_1^n}(\tau_1^n) - B_{\xi_{N(t)+1}}(a_t)\cdots B_{\xi_1}(\frac{1}{\beta}) \| \|X_0\| .
\end{align*}
We can prove by induction on $N(t)$ that:
\[
\| B_{\xi_{N(t)+1}^n}(a_t^n)\cdots B_{\xi_1^n}(\tau_1^n) - B_{\xi_{N(t)+1}}(a_t)\cdots B_{\xi_1}(\frac{1}{\beta}) \| \longrightarrow 0 \text{ in probability.}
\]
The case $N(t)=1$ shows us what happens:
\begin{align*}
\|B_{\xi_2}(t-\tau_1)B_{\xi_1}(\tau_1)-B_{\xi_2}(t-\frac{1}{\beta})B_{\xi_1}(\frac{1}{\beta})\| &\leq \|B_{\xi_2}(t-\tau_1)\| \|B_{\xi_1}(\tau_1)-B_{\xi_1}(\frac{1}{\beta})\| \\
                                                                                                &\text{ }+\|B_{\xi_2}(\frac{1}{\beta})-B_{\xi_2}(\tau_2)\|\|B_{\xi_1}(\tau_1)\|        \\
												&\leq C_1|\tau_1-\frac{1}{\beta}|.
\end{align*}
By using the inequality of Bienaymé-Tchebychev, we gain the claimed convergence. Moreover we can say that this convergence also holds 
in $L^1$ as the random variable is bounded because our process does not explode in a finite time.\\
We prove now that $\dP[N^n(t)=N(t)] \longrightarrow 1$ when $n$ goes to $+\infty$. First, let us write that $N(t) = p$ and $t=\frac{p}{\beta}+\eta$ where $0\leq\eta<\frac{1}{\beta}$. 
We have:
\begin{align*}
\dP[N^n(t)=p] &= \dP\left[\tau_1+\tau_2+...+\tau_p \leq t < \tau_1+...+\tau_p+\tau_{p+1}\right]         \\
		 &= \dP\left[0\leq t-\sum_{i=1}^p \tau_i \text{ and } t<\sum_{i=1}^{p+1} \tau_i\right]     \\
		 &= \dP\left[\sum_{i=1}^p \tau_i -\frac{p}{\beta} < \eta\right]\dP\left[t<\sum_{i=1}^{p+1} \tau_i \text{ }|\text{ }0\leq t-\sum_{i=1}^p \tau_i \right].
\end{align*}
The first probability goes to $1$ when $n$ goes to $+\infty$ thanks to the Bienaymé-Tchebychev inequality and the other probability is nonzero. 
This means that we have the claimed convergence: $\dP[N^n(t)=N(t)] \longrightarrow 1$ when $n$ goes to $+\infty$.\\
Back in the inequality, we take the expectation and using the previous results we 
prove that $\dE[\|X_t^n - X_t\|]$ goes to $0$ uniformly on $[0,T]$.\\
\end{proof}

\section{Behaviour of the switched system with two centers of attraction}

We slightly modify our initial system by shifting the centers of attraction of each flow.
Assume that $A_0$ and $A_1$ are Hurwitz matrices and $b_0$ and $b_1$ in $\dR^d$ (with $b_0 \neq b_1$). 
For $i=0,1$ and $x\in\dR^d$, let $t>0\mapsto \varphi_t^i(x)$ the solution of $\dot {x}_t=A_i(x_t-b_i)$ 
with $x_0=x$. 

\begin{rem}\label{rem:expo}
There exists $C\geq 1$ and $\eta>0$ such that, for all $t\geq 0$, 
\[
\NRM{\varphi_t^i(x)-b_i}\leq C e^{-\eta t}\NRM{x-b_i}.
\]
\end{rem}

The goal of this section is to discuss the long time behavior of $(X_t)$ if ${(I_t)}_{t\geq 0}$ jumps from 
1 to 0 (resp. from 0 to 1) with rate $\lambda_1$ (resp. $\lambda_0$) i.e. $(X_t)$ solution of \eqref{eq:dec}. \\
Let us couple two paths $(X_t,I_t)$ and $(\tilde X_t,\tilde I_t)$ starting from $(x,i)$ 
and $(\tilde x,i)$ choosing the same jump times i.e. the same process ${(I_t)}_{t\geq 0}$. 
Then, the difference $D_t=X_t-\tilde X_t$ is solution of $\dot{D}_t=A_{I_t}D_t$.  

\begin{rem}
 If the initial discrete components are different, one has to wait for a 
random time $T$ following an exponential law of parameter $\lambda_0+\lambda_1$ and then, they can be chosen 
 equal for all $t\geq T$. 
\end{rem}

One can write $D_t=R_t U_t$ with $R_t>0$ and $U_t\in S^1$. Recall that 
\[
R_t=R_0 \exp\PAR{\int_0^t\cA(U_s,I_s)ds}.
\]
Let us define 
\[
\Psi_p(t)=\sup_{u\in S^1,i=0,1} \SBRA{\dE_{(u,i)} \PAR{\exp\PAR{p\int_0^t\cA(U_s,I_s)ds}}}^{1/p}.
\]
Thanks to Markov property, the function $\Psi_p$ satisfies $\Psi_p(t+s)\leq \Psi_p(t)\Psi_p(s)$. 
As a consequence, there exists $\chi_p$ such that 
\[
\chi_p=\lim_{t\to\infty} \frac{1}{t}\log \Psi_p(t).  
\]
Moreover, Jensen's inequality ensures that $\chi_p\geq \chi$. As a consequence, 
it is possible that $X_t\to 0$ a.s. with $\chi_p>0$ for some $p>0$.   

\begin{rem}
It is not easy to determine the sign of the Lyapunov exponent $\chi_p$.   
\end{rem}

\begin{lem}
When $\lambda_0$, $\lambda_1$ and $p$ are sufficiently small, there is convergence 
in Wasserstein distance for $(D_t)$ after the coupling of the discrete components.
\end{lem}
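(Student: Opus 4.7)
The plan is to show exponential decay of the $p$-th moment $\dE[\NRM{D_t}^p]$ for some small $p>0$. Since the coupling $(X_t,\tilde X_t)$ sharing the same discrete component $(I_t)$ is an admissible coupling of the marginals, this immediately controls the $p$-Wasserstein distance via
\[
W_p(\mathcal L(D_t),\delta_0)^p \;=\; \dE[\NRM{D_t}^p]\;=\;\dE[R_t^p],
\]
where $R_t=\NRM{D_t}$ as in the excerpt.

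To bound $\dE[R_t^p]$, I would decompose $R_t$ along the common jump times $0=T_0<T_1<\cdots<T_{N(t)}\le t$ of $(I_t)$. Between two consecutive jumps, $D_s$ satisfies $\dot D_s = A_{I_s}D_s$ for a fixed Hurwitz $A_i$, so Remark~\ref{rem:expo} applied to the affine translate gives
\[
R_{T_{k+1}}\;\le\; C\,e^{-\eta(T_{k+1}-T_k)}\,R_{T_k},
\]
and the analogous estimate holds on the terminal piece $[T_{N(t)},t]$. Telescoping yields the pathwise bound $R_t\le C^{N(t)+1}e^{-\eta t}R_0$, hence
\[
\dE[R_t^p] \;\le\; C^p\, R_0^p\, e^{-p\eta t}\,\dE[C^{pN(t)}].
\]
Since $N(t)$ is stochastically dominated by a Poisson process of rate $\lambda:=\max(\lambda_0,\lambda_1)$, one gets $\dE[C^{pN(t)}]\le \exp(\lambda t(C^p-1))$, and therefore
\[
\dE[R_t^p]^{1/p} \;\le\; C\,R_0\,\exp\!\PAR{-t\SBRA{\eta - \lambda\,\tfrac{C^p-1}{p}}}.
\]

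The conclusion then follows by noting that $(C^p-1)/p\to\log C$ as $p\to 0^+$, so whenever $\lambda_0,\lambda_1<\eta/\log C$ and $p$ is chosen small enough (equivalently $\lambda(C^p-1)<p\eta$), the exponent is strictly negative and $\dE[\NRM{D_t}^p]$ decays exponentially, which is the claimed Wasserstein convergence. The main obstacle is that the Hurwitz estimate of Remark~\ref{rem:expo} loses a multiplicative constant $C\ge 1$ on each flow segment; this multiplicative cost is precisely what forces the smallness assumptions both on the jump rates and on $p$, since the crude domination of $N(t)$ by a Poisson process is essentially the best one can do without using the structure of the two flows more finely. A more abstract alternative would be to argue by continuity of the moment Lyapunov exponent $\chi_p$ at the degenerate point $(p,\lambda_0,\lambda_1)=(0,0,0)$, where the no-jump limit directly gives $\int_0^t\cA(U_s,i)ds\le \log C-\eta t$ deterministically and hence $\chi_0\le-\eta<0$.
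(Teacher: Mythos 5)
Your proof is correct and, up to the treatment of unequal jump rates, it coincides with the paper's own argument: the pathwise bound $\NRM{D_t}\leq C^{N_t+1}e^{-\eta t}\NRM{D_0}$ obtained by factoring $D_t$ along the jump times and applying Remark~6.1 on each segment, and the resulting estimate $\SBRA{\dE\PAR{\NRM{D_t}^p}}^{1/p}\leq C\exp\PAR{-\PAR{\eta-\lambda(C^p-1)/p}t}\SBRA{\dE\PAR{\NRM{D_0}^p}}^{1/p}$ in the symmetric case $\lambda_0=\lambda_1=\lambda$, are exactly the paper's. The genuine difference is how you handle $\lambda_0\neq\lambda_1$: you dominate $N_t$ stochastically by a Poisson process of rate $\lambda=\max(\lambda_0,\lambda_1)$, which is legitimate since the holding times of $(I_t)$ alternate between exponentials of rates $\lambda_0$ and $\lambda_1$, each stochastically dominating an exponential of rate $\lambda$, and $N\mapsto C^{pN}$ is nondecreasing for $C\geq1$, $p>0$; this gives $\dE(C^{pN_t})\leq e^{\lambda t(C^p-1)}$ at once. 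The paper instead computes the generating function $G_i(t)=\dE_i(c^{N_t})$ exactly: a renewal equation and an integration by parts show that $G_0$ and $G_1$ solve $y''+(\lambda_0+\lambda_1)y'+\lambda_0\lambda_1(1-c^2)y=0$, whose dominant characteristic root $\frac{1}{2}\PAR{-(\lambda_0+\lambda_1)+\sqrt{(\lambda_0-\lambda_1)^2+4\lambda_0\lambda_1c^2}}$, with $c=C^p$, is the exact exponential growth rate of $\dE(c^{N_t})$. Your route is shorter and more elementary; the paper's buys a sharper smallness threshold, since for asymmetric rates the exact rate is governed (for large $c$) by the geometric mean $\sqrt{\lambda_0\lambda_1}$ rather than the maximum --- for instance it vanishes as $\lambda_0\to0$, while your bound stays at $\lambda_1(C^p-1)$. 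For the qualitative statement of the lemma both suffice, and your observation that $(C^p-1)/p\to\log C$ as $p\to0^+$ correctly identifies the limiting condition $\lambda<\eta/\log C$. One cosmetic point: the conclusion is better phrased as $W_p\PAR{\mathcal{L}(X_t),\mathcal{L}(\tilde X_t)}\leq \SBRA{\dE\NRM{D_t}^p}^{1/p}$ via the synchronous coupling (as in \cite{bgm}), rather than as a distance from $\mathcal{L}(D_t)$ to $\delta_0$; this is exactly what your moment bound delivers, so nothing is missing.
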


\begin{proof}
Once again, we assume that the initial discrete components are equal. Denote by 
$N_t$ the number of jumps for ${(I_t)}_{t\geq 0}$ before time $t$. One has
$I_t=(-1)^{N_t}I_0$. 
\[
D_t=\exp\PAR{(t-T_{N_t})A_{T_{N_t}}}
\exp\PAR{(T_{N_t}-T_{N_t-1})A_{I_{N_t-1}}}\cdots
\exp\PAR{(T_2-T_1) A_{I_{T_1}}}
\exp\PAR{T_1 A_{I_0}}D_0.
\]
Using Remark~\ref{rem:expo}, one has 
\[
\NRM{D_t}\leq C^{N_t+1} e^{-\eta t} \NRM{D_0}.
\]
If the jump rates of $I$ are equal, ($\lambda_0=\lambda_1=\lambda)$, then 
${(N_t)}_{t\geq 0}$ is a simple Poisson process and 
\[
\SBRA{\dE\PAR{\NRM{D_t}^p}}^{1/p} 
\leq C \exp\PAR{-\PAR{\eta-\frac{\lambda(C^p-1)}{p}}t}\SBRA{\dE\PAR{\NRM{D_0}^p}}^{1/p}.
\]
This ensures that when $\lambda$ and $p$ are sufficiently small one can establish convergence 
in Wasserstein distance, after the coupling of the discrete components (see \cite{bgm}).

 If the jump rates are not equal, ${(N_t)}_{t\geq 0}$ is not a Poisson process. For a fixed $c\in \dR$, 
 let us define $G_i(t)=\dE_i(c^{N_t})$. Then, if $T$ is the first jump time,   
\begin{align*}
G_1(t)&=\dE_1\PAR{c^{N_t}\ind_\BRA{T\leq t}}+\dE_1\PAR{c^{N_t}\ind_\BRA{T>t}}\\
&=\int_0^t c G_0(t-s)\lambda_1 e^{-\lambda_1 s}ds+ e^{-\lambda_1 t}.
\end{align*}
As a consequence, thanks to an integration by parts,
\begin{align*}
 G_1'(t)&=cG_0(0)\lambda_1 e^{-\lambda_1 t}+c\int_0^t G_0'(t-s)\lambda_1e^{-\lambda_1 s}ds
 -\lambda_1 e^{-\lambda_1 t}\\
&=c\lambda_1e^{-\lambda_1 t}+ c\SBRA{-G_0(t-s)\lambda_1 e^{-\lambda_1 t}}_0^t 
-c\int_0^t G_0(t-s)\lambda_1^2e^{-\lambda_1s}ds -\lambda_1e^{-\lambda_1t }\\
&=-\lambda_1G_1(t) +\lambda_1c G_0(t).
\end{align*}
Notice that, if $\lambda_1=\lambda_0$, one recovers that $G_0(t)=G_1(t)=\exp(\lambda t (c-1))$.
In the general case, $G_0$ and $G_1$ are solutions of 
\[
y''+(\lambda_0+\lambda_1)y'+\lambda_0\lambda_1(1-c^2)y=0
\]
with respective initial conditions $G_i(0)=1$, $G_i'(0)=\lambda_i(c-1)$.

After solving this simple equation, it also comes that for $\lambda$ and $p$ small enough, we can establish convergence in 
Wasserstein distance.
\end{proof}

We are now interested in the invariant measure, if it exists, of our process $(X_t)_{t\geq0}$ solution of \eqref{eq:dec}. 
Let us call $T_0 = 0$, $T_1$, ..., $T_n$ the jumping times of the process $(X_t)$ and $\tau_0$, $\tau_1$, ..., $\tau_n$,  
the interjump times following an exponential law of parameters $\lambda_0$ and $\lambda_1$ alternatively. Let us define $Y_n = X_{T_{2n}}$. By simply solving the equation \ref{eq:dec} between each 
jumping times we obtain:
\[
 Y_n = e^{\tau_{n+1} A_1}e^{\tau_n A_0} Y_{n-1} - e^{\tau_{n+1}} b_0 + b_1.
\]
Applying Kesten's renewal theorem (\cite{Kesten1973}) to the discrete process $(Y_n)$ will give some informations about the tail of the invariant measure of $(X_t)$.

\begin{thm}
If there is no deterministic control that makes the deterministic process associated to $(Y_t$) explodes, it means that there 
exists an invariant measure $\mu$ for our process and the support of $\mu$ is necessarily bounded.

In the other hand, we assume that $\chi < 0$ and that there exists a deterministic control such as:
\begin{equation}\label{eq:expl}
 \exists k\in 2\dN+1 \text{ such as }\forall x \in \dS_{d-1}, \exists t_0,t_1,...,t_k > 0 \text{ such as } \|e^{t_kA_{I_k}}...e^{t_1A_{I_1}}e^{t_0A_{I_0}}x\| > \|x\|.
\end{equation}
We call $B = e^{\tau_kA_{I_k}}...e^{\tau_1A_{I_1}}e^{\tau_0A_{I_0}}$ and 
$B_0$, $B_1$, ..., $B_n$ $n$ random variables i.i.d following the law of $B$. We also assume that:
\begin{equation}\label{eq:ret}
 \underset{n\geq0}{\max} \text{ } \dP \left( \frac{B_n...B_0x}{\|B_n...B_0x\|} \in U \right) >0 \text{ for all } x\in \dS_{d-1} \text{ and any open } \emptyset \neq U \subset \dS_{d-1}.
\end{equation}
\begin{equation}\label{eq:dens}
 \dP(B_{n_0}...B_0 \in .)\geq \gamma_0 \ind_{B_c(\Gamma_0)}\lambda \text{ for some } \Gamma_0 \in GL_d(\dR), \text{ } n_0 \in \dN \text{ and } c,\gamma_0>0.
\end{equation}
where $\lambda$ is the Lebesgue measure on $\dR^{n^2}$.
Under these assumptions, there exists a unique invariant measure $\mu$ for the process $(Y_t)$ and it has an heavy tail:
\[
\exists p_1>0 \text{ such as } \forall 0\leq p<p_1 \quad \dE_{\mu}[\|Y\|^p] < +\infty \text{ and } \forall p>p_1 \quad \dE_{\mu}[\|Y\|^p] = +\infty.
\]

\end{thm}

\begin{proof}Let $x\in \dS_{d-1}$. There exist $t_0$, $t_1$,... ,$t_k>0$ such as: 
\[
\| e^{t_kA_{I_k}}...e^{t_1A_{I_1}}e^{t_0A_{I_0}}x\| > \|x\|.
\]
Consequently, by continuity, there exist $\epsilon >0$ and $\delta >0$ such as:
\[
 \forall s_i \in J_i=(t_i-\delta,t_i+\delta), \quad \|e^{s_kA_{I_k}}...e^{s_1A_1}e^{s_0A_0}\|\geq1+\epsilon.
\]
Let us write $B = e^{\tau_kA_{I_k}}...e^{\tau_1A_1}e^{\tau_0A_0}$, where $\tau_0$, $\tau_1$,... ,$\tau_k$ are $k+1$ random exponential variables. We have:
\begin{align*}
\dE\left[ \|B\|^p\right] & \geq \dE \left[ \ind_{\tau_k\in J_k}...\ind_{\tau_1\in J_1}\ind_{\tau_0\in J_0} \|B\|^p \right]\\
                         & \geq \dP(\tau_k\in J_k)...\dP(\tau_1\in J_1)\dP(\tau_0\in J_0)(1+\epsilon)^p
\end{align*}
As $\tau_0$, $\tau_1$,... ,$\tau_k$ are $k+1$ exponential random variables, $\dP(\tau_k\in J_k)...\dP(\tau_1\in J_1)\dP(\tau_0\in J_0) >0$. Consequently:
\[
\dE\left[ \|B\|^p\right] \underset{p\rightarrow +\infty}{\longrightarrow} +\infty,
\]
meaning that there exists $x_1>0$ such as $\dE\left[ \|B\|^{x_1}\right] = 1$. Consequently, if the process $(Y_t)$ does not explode, 
\cite{gerold1} ensures that under the invariant measure $\mu$:
\[
\dP(\|Y\|>r) \sim \frac{c}{r^{x_1}}.
\]
It implies that $\mu$ has finite moments of order $p<x_1$.
\end{proof}

\begin{rem}
In dimension 2, hypothesis (\ref{eq:expl}) can be verified using the criteria in \cite{deter} which gives the existence of an 
explosive control for the switched system. In higher dimension, no general result gives this information.
\end{rem}

\begin{rem}
Hypothesis (\ref{eq:ret}) and (\ref{eq:dens}), directly extracted from a version of Kesten's renewal theorem in \cite{gerold1}, 
tells us, basically, that the process $(Y_t)$ creates density according to the Lebesgue measure and its projection on the sphere $\dS_{d-1}$ 
is a recurrent irreducible process. These hypothesis are not easy to check in the general case.
\end{rem}

\begin{cor}
 If there exists a deterministic control that makes the system explode from one of the stable point $b_0$ or $b_1$, then, 
since $A_0$ and $A_1$ are Hurwitz, the invariant measure has an heavy tail.
\end{cor}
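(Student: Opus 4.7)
My plan is to deduce the corollary from the theorem above by checking that an explosive deterministic control for the decentered system~\eqref{eq:dec} starting at a stable point implies hypothesis~\eqref{eq:expl} for the linear switched system generated by $A_0$ and $A_1$. The Hurwitz assumption will be used to handle the remaining ingredients of that theorem: $\chi<0$ holds at least in the natural regimes of the switching rates, and, in dimension~$2$, the density and recurrence conditions~\eqref{eq:ret}--\eqref{eq:dens} follow from the spiraling nature of each $e^{tA_i}$.

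First I would translate the affine dynamics into a linear one. Starting from $X_0=b_0$ with $I_0=0$ and integrating \eqref{eq:dec} between the jump times gives, by a straightforward induction,
\[
X_{T_n}-b_{I_n} \;=\; \sum_{k=0}^{n-1} e^{t_{n-1}A_{I_{n-1}}}\cdots e^{t_{k+1}A_{I_{k+1}}}(b_{I_k}-b_{I_{k+1}}),
\]
where each increment $b_{I_k}-b_{I_{k+1}}$ equals $\pm(b_0-b_1)$ because the discrete component alternates. If the deterministic trajectory explodes, the norm of the left-hand side diverges with $n$, and hence at least one product on the right-hand side must satisfy $\|B(b_0-b_1)\|>\|b_0-b_1\|$. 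This produces one choice of states and times for which the norm-growth condition of~\eqref{eq:expl} holds at the single direction $x^\star=(b_0-b_1)/\|b_0-b_1\|\in\dS_1$.

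The hard part is upgrading this one-direction statement to the uniform condition over the whole sphere required by~\eqref{eq:expl}. In dimension~$2$ I would appeal to the preceding remark together with the geometric criterion of~\cite{deter}: once an explosive control is available for the linear system, both Hurwitz spirals rotate $\dS_1$ at positive angular speed, so that for any $x\in\dS_1$ one can prepend short arcs of each flow to drive $x$ into any prescribed neighborhood of $x^\star$; the bounded contraction cost of these arcs is then compensated by iterating the explosive word sufficiently many times, and a final elementary arc adjusts the total length so as to be odd, as demanded by~\eqref{eq:expl}.

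Once~\eqref{eq:expl} is established, the theorem above applies and yields the unique invariant measure together with the claimed heavy tail. The main obstacle I expect is precisely this uniformization step: one has to quantify how much contraction is introduced by the rotating arcs so as to guarantee a strict norm growth from every $x\in\dS_1$, and not merely on an open subset of the sphere.
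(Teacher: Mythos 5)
The paper gives no written proof of this corollary: it is intended as a direct application of Theorem 6.5, the explosive control supplying hypothesis \eqref{eq:expl} (in the plane via the criterion of \cite{deter}, as Remark 6.6 indicates), with $\chi<0$ and hypotheses \eqref{eq:ret}, \eqref{eq:dens} kept as standing assumptions. Your plan of reducing the corollary to \eqref{eq:expl} is therefore the right one, and your variation-of-constants identity for $X_{T_n}-b_{I_n}$ is correct (it follows from $Z_{k+1}=e^{t_kA_{I_k}}Z_k+(b_{I_k}-b_{I_{k+1}})$ with $Z_0=0$). But the pivotal deduction is a non sequitur: from divergence of $\|X_{T_n}-b_{I_n}\|$ you conclude that some product must satisfy $\|B(b_0-b_1)\|>\|b_0-b_1\|$, yet the negation of that conclusion only gives the bound $\|X_{T_n}-b_{I_n}\|\leq n\,\|b_0-b_1\|$ by the triangle inequality, and a sum of $n$ uniformly bounded terms can perfectly well diverge (linearly in $n$). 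So explosion of the affine trajectory is compatible, as far as your estimate goes, with every word being non-expanding. The correct route is the contrapositive through uniform exponential stability: if no word strictly expands, the planar trichotomy of \cite{deter} gives --- outside the degenerate boundary case where the worst-case Lyapunov exponent of the linear system vanishes --- a bound of the form $\|e^{t_mA_{i_m}}\cdots e^{t_1A_{i_1}}\|\leq Ce^{-\eta(t_1+\cdots+t_m)}$; the terms of your sum then decay geometrically and the trajectory stays bounded, in the spirit of the first assertion of Theorem 6.5. Your triangle-inequality argument cannot detect the marginal case, and that case is exactly where a trajectory may drift to infinity while no product strictly expands, so the gap is genuine and not merely cosmetic.

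Two further observations. First, the step you single out as the main obstacle --- upgrading expansion at the single direction $x^\star=(b_0-b_1)/\|b_0-b_1\|$ to all of $\dS_{d-1}$ with a fixed odd $k$ --- is not actually needed for the heavy-tail conclusion: in the paper's proof of Theorem 6.5 only the operator-norm bound $\|e^{s_kA_{I_k}}\cdots e^{s_0A_{I_0}}\|\geq 1+\epsilon$ for $(s_0,\dots,s_k)$ in a product of small intervals is used, and this already follows by continuity from strict expansion at one single direction; it yields $\dE[\|B\|^p]\to\infty$ as $p\to\infty$ and hence the Kesten exponent $x_1$. So you may drop your uniformization scheme entirely once the main gap above is repaired. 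Second, your opening claim that \eqref{eq:ret} and \eqref{eq:dens} ``follow from the spiraling nature'' of the flows is asserted, not proved: the paper deliberately keeps them as hypotheses and warns in Remark 6.7 that they are not easy to check in general (even the three-dimensional Example 6.9 appeals to \cite{bakt} for them). Likewise $\chi<0$ does not follow from the Hurwitz assumption alone for all rates $\lambda_0,\lambda_1$ --- that is the whole point of Theorem 3.1 --- so a complete proof of the corollary must either verify these conditions or carry them along explicitly as hypotheses inherited from Theorem 6.5.
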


\begin{xpl}
 In dimension 3, it is possible to create an example satisfying Theorem 6.5 by picking the following matrices:
\[
A_0=\begin{pmatrix}
 -a & b & 0\\
 -\frac{1}{b} & -a & 0\\
 0 & 0 & -1
\end{pmatrix}
\quad\text{and}\quad
A_1=\begin{pmatrix}
 -1 & 0 & 0\\
 0 & -a & b\\
 0 & -\frac{1}{b} & -a \\
\end{pmatrix}.
\]
When we consider each system $\dot{X_t}=A_iX_t$ separately, one can check that, when $a<1$, the projection of $(X_t)$ on the sphere 
tends to a different ecuador. 
Consequently, using results in \cite{bakt}, it is possible to show that the process $(X_t)$ solution of \eqref{eq:dec} satisfies hypothesis (\ref{eq:ret}) and (\ref{eq:dens}).

\end{xpl}

\textbf{Acknowledgments.} I acknowledge financial support from the French ANR project ANR-12-JS01-0006-PIECE.

\bibliographystyle{plain}
\bibliography{switched}

{\footnotesize %
 \noindent Gabriel \textsc{Lagasquie},
 e-mail: \texttt{gabriel.lagasquie(AT)univ-tours.fr}

 \medskip

 \noindent\textsc{Laboratoire de Mathématiques et Physique Théorique (UMR
CNRS 6083), Fédération Denis Poisson (FR CNRS
2964), Université François-Rabelais, Parc de Grandmont,
37200 Tours, France.}

\end{document}